\newtheorem{theorem}{Theorem}
\newtheorem{corollary}[theorem]{Corollary}
\newtheorem{lemma}[theorem]{Lemma}
\newenvironment{problem}{\pb\rm}{\endpb}
\newtheorem{proposition}[theorem]{Proposition}
\newenvironment{remark}{\rem\rm}{\endrem}
\newcounter{unnumber}
\newenvironment{proof}{\prf\rm}{\hfill{$\blacksquare$}\endprf}
\newcommand{\R}{\mathbb{R}}%
\newcommand{\ol}{\overline}%
\renewcommand{\>}{\right\rangle}
\newcommand{\<}{\left\langle}
\newcommand{\bx}{\ensuremath{\overline{x}}}
\newcommand{\bv}{\ensuremath{\overline{v}}}
\newcommand{\bp}{\ensuremath{\overline{p}}}
\renewcommand{\bv}{\ensuremath{\overline{v}}}
\newcommand{\f}{\ensuremath{\boldsymbol}}
\newcommand{\fx}{\ensuremath{\boldsymbol{x}}}
\newcommand{\fbx}{\ensuremath{\boldsymbol{\overline{x}}}}
\newcommand{\fy}{\ensuremath{\boldsymbol{y}}}
\newcommand{\fz}{\ensuremath{\boldsymbol{z}}}
\newcommand{\fK}{\ensuremath{\boldsymbol{\mathcal{K}}}}
\newcommand{\h}{\ensuremath{\mathcal{H}}}
\newcommand{\g}{\ensuremath{\mathcal{G}}}
\DeclareMathOperator*\inte{int}%
\DeclareMathOperator*\sqri{sqri}%
\DeclareMathOperator*\ri{ri}%
\DeclareMathOperator*\dom{dom}%
\DeclareMathOperator*\B{\overline{\R}}%
\DeclareMathOperator*\gr{Gr}%
\DeclareMathOperator*\ran{ran}%
\DeclareMathOperator*\id{Id}%
\DeclareMathOperator*\prox{prox}%
\DeclareMathOperator*\argmin{argmin}
\DeclareMathOperator*\zer{zer}
\DeclareMathOperator*\fix{Fix}
\DeclareMathOperator*\proj{proj}
\title{Inducing strong convergence into the asymptotic behaviour of proximal splitting algorithms in Hilbert spaces}
\author{Radu Ioan Bo\c{t} \thanks{University of Vienna, Faculty of Mathematics, Oskar-Morgenstern-Platz 1, A-1090 Vienna, Austria,
email: radu.bot@univie.ac.at. Research partially supported by FWF (Austrian Science Fund), project I 2419-N32.}   \and
Ern\"{o} Robert Csetnek \thanks {University of Vienna, Faculty of Mathematics, Oskar-Morgenstern-Platz 1, A-1090 Vienna, Austria,
email: ernoe.robert.csetnek@univie.ac.at. Research supported by FWF (Austrian Science Fund), projects M 1682-N25 and P 29809-N32.}\and 
Dennis Meier \thanks {University of Vienna, Faculty of Mathematics, Oskar-Morgenstern-Platz 1, A-1090 Vienna, Austria,
email: meierd61@univie.ac.at. Research partially supported by FWF (Austrian Science Fund), project I 2419-N32, and by the Doctoral Programme {\it Vienna Graduate School on Computational Optimization (VGSCO)}, project W1260-N35.}}
\begin{document}
\maketitle

\noindent \textbf{Abstract.} Proximal splitting algorithms for monotone inclusions (and convex optimization problems) in Hilbert spaces share the common 
feature to guarantee  for the generated sequences in general weak convergence to a solution. In order to achieve strong convergence, one usually needs 
to impose more restrictive properties for the involved operators, like strong monotonicity (respectively, strong convexity for optimization problems). 
In this paper, we propose a modified Krasnosel'ski\u{\i}--Mann algorithm in connection with the determination of a fixed point of a nonexpansive mapping 
and show strong convergence of the iteratively generated sequence to the minimal norm solution of the problem. Relying on this, we derive a forward-backward 
and a Douglas-Rachford algorithm, both endowed with Tikhonov regularization terms, which generate iterates that strongly converge to the minimal norm 
solution of the set of zeros of the sum of two maximally monotone operators. Furthermore, we formulate strong convergent primal-dual algorithms of forward-backward and Douglas-Rachford-type for
highly structured monotone inclusion problems involving parallel-sums and compositions with linear operators. The resulting iterative schemes are 
particularized to  the solving of convex minimization problems. The theoretical results are illustrated by numerical experiments on the split feasibility problem 
in infinite dimensional spaces. 
\vspace{1ex}

\noindent \textbf{Key Words.} fixed points of nonexpansive mappings, Tikhonov regularization, splitting methods, forward-backward algorithm, 
Douglas-Rachford algorithm, primal-dual algorithm\vspace{1ex}

\noindent \textbf{AMS subject classification.} 47J25, 47H09, 47H05, 90C25

\section{Introduction and preliminaries}\label{sec1}

Let ${\cal H}$ be a real Hilbert space endowed with inner product $\langle\cdot,\cdot\rangle$ and associated norm
$\|\cdot\|=\sqrt{\langle \cdot,\cdot\rangle}$. Let $T:{\cal H}\rightarrow {\cal H}$ be a nonexpansive mapping, that is 
$\|Tx-Ty\|\leq\|x-y\|$ for all $x,y\in {\cal H}$. One of the most popular iterative methods for finding a fixed point of the operator $T$ 
is the Krasnosel'ski\u{\i}--Mann algorithm
\begin{equation}\label{km-alg}x_{n+1}=x_n+\lambda_n\big(T x_n-x_n\big) \ \forall n \geq 0,\end{equation}
where $x_0\in {\cal H}$ is arbitrary and $(\lambda_n)_{n \geq 0}$ is a sequence of nonnegative real numbers. Provided 
$\fix T=\{x\in {\cal H}:Tx=x\}\neq\emptyset$, one can show under mild conditions imposed on $(\lambda_n)_{n\geq 0}$, 
that the sequence $(x_n)_{n\geq 0}$ converges weakly to an element in $\fix T$ (see for instance \cite{bauschke-book}). 

The applications and the impact of this fundamental result go beyond the usual fixed point theory, representing in fact the starting point for the derivation of algorithms and related convergence statements in connection with the solving of monotone inclusions. In this context, we mention the classical \emph{forward-backward algorithm} for determining a zero of the sum of a set-valued maximally monotone operator and a  single-valued and cocoercive one and the \emph{Douglas-Rachford algorithm} for determining a zero of the sum of two set-valued maximally monotone operators. The paradigms behind these classical methods can be transferred to the solving of convex optimization problems, too (see \cite{bauschke-book}).

The iterative algorithms mentioned above share the common property that the generated sequences converge weakly to a solution of the problem under investigation. However, for applications where infinite dimensional functional spaces are involved, weak convergence is not satisfactory. 
In order to achieve strong convergence, one usually needs to impose more restrictive properties for the involved operators, like strong monotonicity when considering monotone inclusions and strong convexity when solving optimization problems. Since there evidently are applications for which these stronger properties are not fulfilled, the interest of the applied mathematics community in developing algorithms which generate iterates that strongly convergence is justified. 
 
We mention in this sense the \emph{Halpern algorithm}  and its numerous variants designed for finding a fixed point of a nonexpansive mapping (see for instance \cite{chidume2009}). In the context of solving
monotone inclusions, we mention the \emph{proximal-Tikhonov algorithm} 
$$x_{n+1}=\big(\id+\lambda_n(A+\mu_n\id)\big)^{-1}(x_n) \ \forall n \geq 0,$$ 
where $A:{\cal H}\rightrightarrows{\cal H}$ is a maximally monotone operator, $\id$ is the identity operator on ${\cal H}$ and 
$(\lambda_n)_{n \geq 0}$ and $(\mu_n)_{n \geq 0}$ are sequences of nonnegative real numbers. Under mild conditions imposed on 
$(\lambda_n)_{n\geq 0}$ and $(\mu_n)_{n\geq 0}$, one can prove strong convergence of $(x_n)_{n\geq 0}$ to the minimal norm solution 
of the set of zeros of $A$ (see \cite{lehdili-moudafi, xu2002}). It is important to emphasize the Tikhonov regularization terms  
$(\mu_n\id)_{n\geq 0}$ in the above scheme, which actually enforces the strong convergence property. In the absence of the regularization term, 
the above numerical scheme becomes the classical \emph{proximal algorithm} for determining a zero of the operator $A$, 
for which in general only weak convergence can be proved (see \cite{rock-prox}). 
For more theoretical results concerning Tikhonov regularization  and more motivational arguments for using such techniques, 
especially for optimization problems, we refer the reader to Attouch's paper \cite{attouch1996}. For other techniques and tools in order to 
achieve strong convergence we mention also the works of Haugazeau \cite {haug} and \cite{ba-co}. 

In this article, we will first introduce and investigate a modified Krasnosel'ski\u{\i}--Mann algorithm with relaxation parameters, having the outstanding property that it generates a sequence of iterates which converges strongly to the minimal norm solution 
of the fixed points set of a nonexpansive mapping. In contrast to \cite{moudafi2000, xu2002, chidume2009} (see also the  references therein), where the techniques and tools used have their roots in fixed point theory results for contractions, our convergence statements follow more directly.  
Relying on this, we derive a forward-backward and a Douglas-Rachford algorithm, both endowed with Tikhonov regularization terms, which generate iterates that strongly converge to the minimal norm solution of the set of zeros of the sum of two maximally monotone operators. 
The resulting iterative schemes are particularized to the minimization of the sum of two convex functions.

Furthermore, we deal with complexly structured monotone inclusions where parallel-sums and compositions with linear operators are involved. By making use of
modern primal-dual techniques (see \cite{br-combettes, combettes-pesquet, Con12, vu, ch-pck} and also \cite{b-c-h2, b-h2}), we derive strongly convergent numerical schemes of 
forward-backward and Douglas-Rachford type, both involving Tikhonov regularization terms and having the remarkable property that all the operators are evaluated separately. Moreover, the designed algorithms solve both the structured monotone inclusion problem 
and its dual monotone inclusion problem in the sense of Attouch-Th\'{e}ra (see \cite{AttThe96}). When particularized to convex 
optimization problems, this means the concomitantly solving of a primal problem and its Fenchel dual 
one. For other types of primal-dual algorithms with strong convergence properties and their applications we refer the 
reader to \cite{abd-comb-sha2015, att-br-comb2016}. Finally, in the last section we carry out numerical experiments 
on the split feasibility problem in infinite dimensional Hilbert spaces which illustrate the potential of the algorithm endowed with Tikhonov regularization terms. 

In the remaining of this section, we recall some results which will play a decisive role in the convergence analysis of the proposed algorithms. The following result  is related to the convergence of a sequence satisfying a sharp quasi-Fej\'er monotonicity property and follows as a direct consequence of \cite[Lemma 2.5]{xu2002}.

\begin{lemma}\label{ineq-s-n+1-s-n} Let $(a_n)_{n\geq 0}$ be a sequence of non-negative real numbers 
satisfying the inequality $$a_{n+1}\leq(1-\theta_n)a_n+ \theta_n b_n + \varepsilon_n \ \forall n\geq 0,$$
where 

(i) $0\leq\theta_n\leq 1$ for all $n\geq 0$ and $\sum_{n\geq 0}\theta_n=+\infty$; 

(ii) $\limsup_{n \rightarrow + \infty} b_n \leq 0$;

(iii) $\varepsilon_n\geq 0$ for all $n\geq 0$ and $\sum_{n\geq 0}\varepsilon_n<+\infty$. 

Then the sequence $(a_n)_{n\geq 0}$ converges to $0$. 
\end{lemma}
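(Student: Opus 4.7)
The plan is to reduce to Xu's lemma by exploiting the three hypotheses one at a time. First I would fix an arbitrary $\varepsilon>0$ and use (ii) to pick $N_0$ so large that $b_n\leq \varepsilon$ for every $n\geq N_0$. For such indices the recursive inequality becomes
\begin{equation*}
a_{n+1}\leq (1-\theta_n)a_n+\theta_n\varepsilon+\varepsilon_n.
\end{equation*}
Iterating this from $N_0$ up to $n$ yields
\begin{equation*}
a_{n+1}\leq \Bigl(\prod_{k=N_0}^{n}(1-\theta_k)\Bigr)a_{N_0}+\varepsilon\Bigl(1-\prod_{k=N_0}^{n}(1-\theta_k)\Bigr)+\sum_{k=N_0}^{n}\varepsilon_k,
\end{equation*}
which is the standard telescoping bound for this kind of recursion and will be the backbone of the argument.

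Next I would handle the three terms on the right. The middle term is bounded by $\varepsilon$ for free. The rightmost term tends to $0$ as $n\to\infty$ because $\sum_{n\geq 0}\varepsilon_n<+\infty$ by (iii), so its tail is arbitrarily small. For the leftmost term, the product $\prod_{k=N_0}^{n}(1-\theta_k)$ satisfies $\log\prod_{k=N_0}^{n}(1-\theta_k)=\sum_{k=N_0}^{n}\log(1-\theta_k)\leq -\sum_{k=N_0}^{n}\theta_k\to-\infty$ by (i), using $\log(1-t)\leq -t$ on $[0,1)$ (the degenerate case $\theta_k=1$ is trivial since then the product is already $0$). Hence $\prod_{k=N_0}^{n}(1-\theta_k)\to 0$ and the first term vanishes.

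Combining these three observations, $\limsup_{n\to\infty}a_{n+1}\leq \varepsilon$. Since $\varepsilon>0$ was arbitrary and $(a_n)_{n\geq 0}$ is non-negative, this forces $a_n\to 0$.

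The main obstacle, and the reason the statement is nontrivial despite its telescoping flavour, is controlling the product $\prod_{k=N_0}^{n}(1-\theta_k)$: one has to be careful that the factors are genuinely in $[0,1]$ (which is given by (i)) so that the inequality $\log(1-\theta_k)\leq -\theta_k$ is legitimate and the product is monotone non-increasing. Once this is in place, the rest of the argument is a clean three-term estimate, and the whole lemma is essentially a quantitative version of the observation that a quasi-Fejér-type recursion with a slowly forgetting weight $\theta_n$ inherits the asymptotic behaviour of the driving sequence $b_n$, up to summable perturbations.
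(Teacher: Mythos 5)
Your argument is correct and is essentially the standard proof of this result. The paper itself gives no proof of the lemma: it simply defers to \cite[Lemma 2.5]{xu2002}, of which the statement is a direct consequence, so your self-contained derivation is in fact more than the paper provides, and it follows the usual route (freeze $b_n\leq\varepsilon$ beyond some $N_0$, telescope, kill the product via $\sum_n\theta_n=+\infty$). One small imprecision to repair: for fixed $N_0$ the term $\sum_{k=N_0}^{n}\varepsilon_k$ does \emph{not} tend to $0$ as $n\to\infty$; it increases to the tail sum $\sum_{k\geq N_0}\varepsilon_k$. The fix is to choose $N_0$ at the outset large enough that this tail is also at most $\varepsilon$, which hypothesis (iii) permits; then your three-term estimate gives $\limsup_{n\to\infty}a_{n+1}\leq 2\varepsilon$ and the conclusion follows as you say. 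Everything else --- the induction behind the telescoping bound (which uses $0\leq 1-\theta_k\leq 1$ to drop the weights on the $\varepsilon_k$), the estimate $\log(1-t)\leq -t$ forcing $\prod_{k=N_0}^{n}(1-\theta_k)\to 0$, and the final arbitrariness-of-$\varepsilon$ step --- is sound.
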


We close this section with a result that is a consequence of the demiclosedness principle (see \cite[Corollary 4.18]{bauschke-book}) and it will be used in the proof of Theorem \ref{tikhonov-nonexp}, which is the main result of this paper. 

\begin{lemma}\label{demi} Let $T:{\cal H}\rightarrow {\cal H}$ be a nonexpansive operator and let $(x_n)_{n\geq 0}$ be a sequence in 
${\cal H}$ and $x\in {\cal H}$ be such that $w-\lim_{n\rightarrow+\infty} x_n=x$ and 
$(Tx_n-x_n)_{n\geq 0}$ converges strongly to $0$ as $n\rightarrow+\infty$. Then $x\in\fix T$.
\end{lemma}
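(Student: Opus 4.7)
The plan is to deduce this lemma as an essentially immediate consequence of the demiclosedness principle for nonexpansive operators, which is the cited \cite[Corollary 4.18]{bauschke-book}. That principle states that if $T:\mathcal{H}\to\mathcal{H}$ is nonexpansive, then $\mathrm{Id}-T$ is demiclosed at every point of its range, meaning: whenever $x_n\rightharpoonup x$ and $(\mathrm{Id}-T)x_n\to y$ strongly, one has $(\mathrm{Id}-T)x=y$.

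With this tool in hand, the proof reduces to a single invocation. I would set $y_n:=x_n-Tx_n$ and observe that by hypothesis $y_n\to 0$ strongly, while $x_n\rightharpoonup x$ weakly. Applying the demiclosedness principle at $0\in\operatorname{ran}(\mathrm{Id}-T)$ (or directly in the form given by \cite[Corollary 4.18]{bauschke-book}), I conclude $x-Tx=0$, i.e., $x\in\operatorname{Fix} T$.

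If one preferred to avoid citing the demiclosedness principle as a black box, a self-contained argument would proceed by a standard Opial-type computation: for any $u\in\operatorname{Fix} T$ (assuming $\operatorname{Fix} T\neq\emptyset$), nonexpansiveness gives $\|Tx_n-u\|\leq\|x_n-u\|$, and combined with $Tx_n-x_n\to 0$ one derives, using Opial's lemma applied to the weak limit $x$, that $x$ must be a fixed point. However, since the lemma does not assume $\operatorname{Fix} T\neq\emptyset$ a priori, and since the cleanest route is a direct appeal to demiclosedness (which does not require this assumption), the application of \cite[Corollary 4.18]{bauschke-book} is the natural path.

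I do not expect any real obstacle here, as the statement is exactly the demiclosedness principle specialized to $y=0$; the only care needed is to make the correspondence precise, namely that the hypothesis $Tx_n-x_n\to 0$ is the same as $(\mathrm{Id}-T)x_n\to 0$, so that the conclusion $(\mathrm{Id}-T)x=0$ translates to $x\in\operatorname{Fix} T$.
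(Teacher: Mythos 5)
Your proof is correct and matches the paper exactly: the paper gives no separate argument for this lemma, presenting it precisely as a direct consequence of the demiclosedness principle in \cite[Corollary 4.18]{bauschke-book}, which is the invocation you make. Your remark that no assumption $\fix T\neq\emptyset$ is needed is also consistent with the paper's usage.
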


\section{A strongly convergent Krasnosel'ski\u{\i}--Mann algorithm}\label{sec2}

In order to induce strong convergence into the asymptotic behaviour of the Krasnosel'ski\u{\i}--Mann algorithm for determining a fixed point of a nonexpansive mapping $T: {\cal H}\rightarrow{\cal H}$, we propose the following modified version of it:
\begin{equation}\label{it-sch-nonexp}x_{n+1}=\beta_nx_n+\lambda_n\big(T(\beta_nx_n)-\beta_nx_n\big) \ \forall n\geq 0,\end{equation}
where $x_0\in{\cal H}$ is the starting point and $(\lambda_n)_{n \geq 0}$ and $(\beta_n)_{n \geq 0}$ suitably chosen sequences of positive numbers. 
In the proof of the theorem below, we denote by $\proj_C:{\cal H}\rightarrow C, \proj_C(x) = \argmin\limits_{c \in C}\|x-c\|$, 
the projection operator onto the nonempty closed convex set $C\subseteq {\cal H}$. We notice that for a nonexpansive mapping 
$T: {\cal H}\rightarrow{\cal H}$, its set of fixed points $\fix T$ is closed and convex (see \cite[Corollary 4.15]{bauschke-book}). 

\begin{theorem}\label{tikhonov-nonexp} Let $(\lambda_n)_{n\geq 0}$ and $(\beta_n)_{n\geq 0}$ be real sequences satisfying the conditions: 

(i) $0<\beta_n\leq 1$ for any $n \geq 0$, $\lim_{n\rightarrow+\infty}\beta_n=1$, $\sum_{n\geq 0} (1-\beta_n)=+\infty$ and 
$\sum_{n\geq 1} |\beta_n-\beta_{n-1}|<+\infty$; 

(ii) $0<\lambda_n\leq 1$ for any $n \geq 0$, $\liminf_{n\rightarrow+\infty}\lambda_n>0$ and 
$\sum_{n\geq 1} |\lambda_n-\lambda_{n-1}|<+\infty$.

Consider the iterative scheme \eqref{it-sch-nonexp} with and arbitrary starting point $x_0\in{\cal H}$ and 
a nonexpansive mapping $T: {\cal H}\rightarrow{\cal H}$ fulfilling $\fix T\neq\emptyset$. Then $(x_n)_{n \geq 0}$ converges strongly to 
$\proj_{\fix T}(0)$.
\end{theorem}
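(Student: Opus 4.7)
\medskip

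\textbf{Proof plan.} Let $p := \proj_{\fix T}(0)$; my target is $\|x_n - p\| \to 0$. The basic scheme is Halpern-like: the regularization $\beta_n x_n$ is a convex combination of $x_n$ and $0$, which should pull the iterates toward the minimal-norm fixed point. I plan to mimic the Xu-type argument. First, boundedness: since $T$ is nonexpansive and fixes $p$, the identity $x_{n+1} = (1-\lambda_n)\beta_n x_n + \lambda_n T(\beta_n x_n)$ combined with the triangle inequality yields
\[
\|x_{n+1} - p\| \le \|\beta_n x_n - p\| \le \beta_n \|x_n - p\| + (1-\beta_n)\|p\|,
\]
which inductively gives $\|x_n - p\| \le \max\{\|x_0 - p\|, \|p\|\}$, so $(x_n)$ is bounded.

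The core asymptotic step is to prove $\|x_{n+1} - x_n\| \to 0$. Here I would subtract the recursions at indices $n$ and $n-1$, invoke nonexpansivity of $T$ to bound $\|T(\beta_n x_n) - T(\beta_{n-1}x_{n-1})\|$ by $\beta_n \|x_n - x_{n-1}\| + |\beta_n - \beta_{n-1}|\|x_{n-1}\|$, and collect the leftover terms into a coefficient in front of $\|x_n - x_{n-1}\|$ that equals $\beta_n = 1 - (1-\beta_n)$, plus an error of order $|\beta_n - \beta_{n-1}| + |\lambda_n - \lambda_{n-1}|$ times bounded quantities. Since $\sum(1-\beta_n)=\infty$ and the errors are summable, Lemma~\ref{ineq-s-n+1-s-n} forces $\|x_{n+1}-x_n\|\to 0$. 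Combined with $\|x_{n+1} - \beta_n x_n\| = \lambda_n \|T(\beta_n x_n) - \beta_n x_n\|$, $\liminf \lambda_n > 0$, and $\beta_n \to 1$, this gives $\|T(\beta_n x_n) - \beta_n x_n\| \to 0$, and then nonexpansivity together with $(1-\beta_n)\|x_n\|\to 0$ implies $\|Tx_n - x_n\| \to 0$. This is the step I expect to be the main obstacle, since the bookkeeping of the summable and non-summable parts has to be done carefully.

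The endgame is a quasi-Fej\'er recursion on $\|x_n - p\|^2$. Expanding
\[
\|\beta_n x_n - p\|^2 = \|\beta_n(x_n-p) - (1-\beta_n)p\|^2 = \beta_n^2\|x_n-p\|^2 + (1-\beta_n)^2\|p\|^2 + 2\beta_n(1-\beta_n)\langle p - x_n, p\rangle,
\]
and using $\|x_{n+1}-p\|^2 \le \|\beta_n x_n - p\|^2$ (from the convexity-of-norm-squared identity applied to the relaxation $(1-\lambda_n)(\beta_n x_n) + \lambda_n T(\beta_n x_n)$), I can rewrite things as
\[
\|x_{n+1}-p\|^2 \le (1-\theta_n)\|x_n-p\|^2 + \theta_n b_n,
\]
with $\theta_n := 1-\beta_n^2 \in [0,1)$, $\sum\theta_n = \infty$ (since $1+\beta_n \ge 1$), and $b_n$ a bounded sequence whose $\limsup$ is controlled by $\limsup \langle p-x_n, p\rangle$. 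To finish via Lemma~\ref{ineq-s-n+1-s-n} I need $\limsup_n \langle p - x_n, p\rangle \le 0$. For this, pick a subsequence realizing the $\limsup$, pass to a further weakly convergent one $x_{n_k} \rightharpoonup \bar x$; by Lemma~\ref{demi} and $\|Tx_n - x_n\| \to 0$ we get $\bar x \in \fix T$, and the variational characterization of $p = \proj_{\fix T}(0)$ gives $\langle p - \bar x, p\rangle \le 0$. Plugging back into Lemma~\ref{ineq-s-n+1-s-n} yields $\|x_n - p\|^2 \to 0$, as required.
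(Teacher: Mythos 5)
Your proposal is correct and follows essentially the same route as the paper's proof: boundedness by induction, asymptotic regularity $\|x_{n+1}-x_n\|\to 0$ via Lemma \ref{ineq-s-n+1-s-n}, then $\|Tx_n-x_n\|\to 0$, control of $\limsup_n\langle p-x_n,p\rangle$ through demiclosedness (Lemma \ref{demi}) and the variational characterization of $\proj_{\fix T}(0)$, and a final application of Lemma \ref{ineq-s-n+1-s-n} to $\|x_n-p\|^2$. The only deviations are cosmetic: you obtain $\|T(\beta_nx_n)-\beta_nx_n\|\to 0$ from $\|x_{n+1}-\beta_nx_n\|=\lambda_n\|T(\beta_nx_n)-\beta_nx_n\|$ rather than by the paper's direct chain of estimates, and you take $\theta_n=1-\beta_n^2$ instead of $1-\beta_n$ in the closing recursion, neither of which changes the argument.
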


\begin{proof} For the beginning, we prove that $(x_n)_{n\geq 0}$ is bounded. Let $x\in\fix T$. Due to the nonexpansiveness of $T$, 
we have for any $n\geq 0$: 
\begin{align*}\|x_{n+1}- x\|= & \ \|(1-\lambda_n)(\beta_n x_n- x)+\lambda_n\big(T(\beta_nx_n)-T x\big)\|\\
\leq &  \ (1-\lambda_n)\|\beta_nx_n- x\|+\lambda_n\|T(\beta_nx_n)-T x\|\\
\leq & \ \|\beta_nx_n- x\|\\
 = & \ \|\beta_n(x_n- x)+(\beta_n-1) x\|\\
\leq & \ \beta_n\|x_n- x\|+(1-\beta_n)\| x\|.
\end{align*}

A simple induction leads to the inequality $$\|x_n- x\|\leq \max\{\|x_0- x\|,\| x\|\} \ \forall n\geq 0,$$
hence $(x_n)_{n\geq 0}$ is bounded. 

We claim that \begin{equation}\label{x-k+1-xk}\|x_{n+1}-x_n\|\rightarrow 0\mbox{ as }n\rightarrow+\infty.\end{equation}
Indeed, by taking into account that $T$ is nonexpansive and that $(x_n)_{n\geq 0}$ is bounded, we obtain for any $n \geq 1$ the following estimates:
\begin{align*}
\|x_{n+1}-x_n\| = & \ \|(1-\lambda_n)\beta_nx_n-(1-\lambda_{n-1})\beta_{n-1}x_{n-1}+\lambda_nT(\beta_nx_n)-\lambda_{n-1}
T(\beta_{n-1}x_{n-1})\|\\
\leq & \ \|(1-\lambda_n)(\beta_nx_n-\beta_{n-1}x_{n-1})+(\lambda_{n-1}-\lambda_n)\beta_{n-1}x_{n-1} \|\\
& + \| \lambda_n\big(T(\beta_nx_n) - T(\beta_{n-1}x_{n-1})\big)+(\lambda_n-\lambda_{n-1})T(\beta_{n-1}x_{n-1})\|\\
\leq & \ \|\beta_nx_n-\beta_{n-1}x_{n-1}\|+|\lambda_n-\lambda_{n-1}|C_1,
\end{align*}
where $C_1>0$. 

Further, we derive for any $n \geq 1$:
\begin{align*}\|x_{n+1}-x_n\|\leq & \ \|\beta_n(x_n-x_{n-1})+(\beta_n-\beta_{n-1})x_{n-1}\|+|\lambda_n-\lambda_{n-1}|C_1\\
\leq & \ \beta_n\|x_n-x_{n-1}\|+|\beta_n-\beta_{n-1}|C_2+|\lambda_n-\lambda_{n-1}|C_1,
\end{align*}
where $C_2>0$. Statement \eqref{x-k+1-xk} is a consequence of Lemma \ref{ineq-s-n+1-s-n}, for 
$a_n:= \|x_n-x_{n-1}\|, b_n:=0, \varepsilon_n:= |\beta_n-\beta_{n-1}|C_2+|\lambda_n-\lambda_{n-1}|C_1$ and $\theta_n:=1-\beta_n, n \geq 1$. 

In the following we prove that \begin{equation}\label{xk-txk}\|x_n-Tx_n\|\rightarrow 0\mbox{ as }n\rightarrow+\infty.\end{equation}
For any $n \geq 0$ we have the following inequalities: 
\begin{align*} \|x_n-Tx_n\|\leq & \ \|x_{n+1}-x_n\|+\|x_{n+1}-Tx_n\|\\
= & \ \|x_{n+1}-x_n\|+\|(1-\lambda_n)(\beta_nx_n-Tx_n)+\lambda_n\big(T(\beta_nx_n)-Tx_n\big)\|\\
\leq & \ \|x_{n+1}-x_n\|+(1-\lambda_n)\|\beta_nx_n-Tx_n\|+\lambda_n\|\beta_nx_n-x_n\|\\
\leq & \ \|x_{n+1}-x_n\|+ (1-\lambda_n)\|\beta_nx_n-\beta_nTx_n\|\\
& +(1-\lambda_n)\|\beta_nTx_n-Tx_n\|+\lambda_n(1-\beta_n)\|x_n\|\\
\leq & \ \|x_{n+1}-x_n\|+ (1-\lambda_n)\|x_n-Tx_n\|\\
& +(1-\lambda_n)(1-\beta_n)\|Tx_n\|+\lambda_n(1-\beta_n)\|x_n\|.
\end{align*}
From here we deduce that for any $n \geq 0$: 
$$\lambda_n\|x_n-Tx_n\|\leq \|x_{n+1}-x_n\|+ (1-\lambda_n)(1-\beta_n)\|Tx_n\|+\lambda_n(1-\beta_n)\|x_n\|.$$
Taking into account that $(x_n)_{n\geq 0}$ is bounded, $\eqref{x-k+1-xk}$ and the properties of the sequences involved, 
we derive from the last inequality that \eqref{xk-txk} holds.  

In what follows we show that $(x_n)_{n\geq 0}$ actually converges strongly to $\proj_{\fix T}(0):=\ol x$. 
Since $T$ is nonexpansive, we have for any $n \geq 0$:
\begin{align*}\|x_{n+1}- \ol x\|= & \ \|(1-\lambda_n)(\beta_n x_n- \ol x)+\lambda_n\big(T(\beta_nx_n)-T \ol x\big)\|\\
\leq &  \ (1-\lambda_n)\|\beta_nx_n- \ol x\|+\lambda_n\|T(\beta_nx_n)-T\ol  x\|\\
\leq & \ \|\beta_nx_n- \ol x\|.
\end{align*}
Hence,
\begin{align}\label{ineq-str-conv} \|x_{n+1}- \ol x\|^2 \leq & \ \|\beta_nx_n- \ol x\|^2\nonumber\\    
= & \ \|\beta_n(x_n-\ol x)+(\beta_n-1)\ol x\|^2\nonumber\\
= & \ \beta_n^2\|x_n-\ol x\|^2+2\beta_n(1-\beta_n)\langle -\ol x,x_n-\ol x\rangle+(1-\beta_n)^2\|\ol x\|^2\nonumber\\
\leq & \ \beta_n\|x_n-\ol x\|^2+(1-\beta_n)\big(2\beta_n\langle -\ol x,x_n-\ol x\rangle+(1-\beta_n)\|\ol x\|^2\big) \ \forall n \geq 0.
\end{align}
Next we show that
 \begin{equation}\label{limsup}\limsup_{n\rightarrow+\infty}\langle -\ol x,x_n-\ol x\rangle\leq 0.\end{equation}
Assuming the contrary, there would exist a positive real number $l$  and a subsequence $(x_{k_j})_{j\geq 0}$
such that 
$$\langle -\ol x,x_{k_j}-\ol x\rangle\geq l  > 0 \ \forall j \geq 0.$$
Due to the boundedness of the sequence  $(x_n)_{n\geq 0}$, we can assume without losing the generality that 
$(x_{k_j})_{j\geq 0}$ weakly converges  to an element $y \in {\cal H}$. According to Lemma \ref{demi}, by taking into consideration \eqref{xk-txk}, it follows that $y\in\fix T$. From this and the 
variational characterization of the projection we easily derive 
$$\lim_{j\rightarrow+\infty}\langle -\ol x,x_{k_j}-\ol x\rangle=\langle -\ol x,y-\ol x\rangle\leq 0,$$
which leads to a contradiction. This shows that \eqref{limsup} holds. 
Thus $$\limsup_{n\rightarrow+\infty}\big(2\beta_n\langle -\ol x,x_n-\ol x\rangle+(1-\beta_n)\|\ol x\|^2\big)\leq 0.$$
A direct application of Lemma \ref{ineq-s-n+1-s-n} to \eqref{ineq-str-conv}, for 
$a_n:= \|x_n-\ol x\|^2, b_n:=2\beta_n\langle -\ol x,x_n-\ol x\rangle+(1-\beta_n)\|\ol x\|^2, \varepsilon_n:= 0$ and 
$\theta_n:=1-\beta_n, n \geq 0,$ delivers the desired conclusion. 
\end{proof}

\begin{remark}\label{parameters} \rm Condition (ii) in the previous theorem is satisfied by every monotonically increasing (and, 
in consequence, convergent) sequence $(\lambda_n)_{n \geq 0} \subseteq (0,1]$ and also by every monotonically decreasing (and, in 
consequence, convergent) sequence $(\lambda_n)_{n \geq 0} \subseteq (0,1]$ having as limit a positive number.

Condition (i) in the previous theorem is satisfied by every monotonically increasing sequence 
$(\beta_n)_{n \geq 0} \subseteq (0,1]$ which fulfills $\lim_{n\rightarrow+\infty}\beta_n=1$ and $\sum_{n\geq 0} (1-\beta_n)=+\infty$, 
as it is for instance the sequence with $\beta_0 \in \left(0, \frac{1}{2}\right)$ and $\beta_n = 1-\frac{1}{n+1}$ for any $n \geq 1$.
\end{remark}

An immediate consequence of Theorem \ref{tikhonov-nonexp} is the following corollary, which proposes an iterative scheme that finds a minimal norm solution of the set of fixed points of an averaged operator. Let $\alpha\in(0,1)$ be fixed. 
We say that $R:{\cal H}\rightarrow{\cal H}$ is an $\alpha$-averaged operator if there exists a nonexpansive operator $T:{\cal H}\rightarrow{\cal H}$ such that $R=(1-\alpha)\id+\alpha T$. It is obvious that $\alpha$-averaged operators are also nonexpansive. 
The $\frac{1}{2}$-averaged operators are nothing else than the firmly nonexpansive ones and form the most most important representatives of this class. For properties and insights into these families  of operators we refer the reader to \cite{bauschke-book}. The following result will play in the next section a determinant role in the converge analysis of the forward-backward method endowed with Tikhonov regularization term.

\begin{corollary}\label{tikhonov-av} Consider the iterative scheme 
\begin{equation}\label{it-sch-av}x_{n+1}=\beta_nx_n+\lambda_n\big(R(\beta_nx_n)-\beta_nx_n\big) \ \forall n\geq 0,\end{equation} 
with $x_0\in{\cal H}$ as starting point, $R: {\cal H}\rightarrow{\cal H}$ an $\alpha$-averaged operator, for $\alpha\in(0,1)$, 
such that $\fix R\neq\emptyset$ and $(\lambda_n)_{n\geq 0}$ and $(\beta_n)_{n\geq 0}$ real sequences satisfying  the conditions: 

(i) $0<\beta_n\leq 1$ for any $n\geq 0$, $\lim_{n\rightarrow+\infty}\beta_n=1$, $\sum_{n\geq 0} (1-\beta_n)=+\infty$ and 
$\sum_{n \geq 1} |\beta_n-\beta_{n-1}|<+\infty$; 

(ii) $0<\lambda_n\leq 1/{\alpha}$ for any $n\geq 0$, $\liminf_{n\rightarrow+\infty}\lambda_n>0$ and 
$\sum_{n\geq 1} |\lambda_n-\lambda_{n-1}|<+\infty$.
Then $(x_n)_{n\geq 0}$ converges strongly to $\proj_{\fix R}(0)$.
\end{corollary}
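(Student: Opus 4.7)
The plan is to reduce this corollary directly to Theorem \ref{tikhonov-nonexp} by unpacking the definition of an $\alpha$-averaged operator. Write $R = (1-\alpha)\id + \alpha T$ for a nonexpansive $T:\mathcal{H}\to\mathcal{H}$. A quick computation then yields
\begin{equation*}
R(\beta_n x_n) - \beta_n x_n = \alpha\bigl(T(\beta_n x_n) - \beta_n x_n\bigr),
\end{equation*}
so the iteration \eqref{it-sch-av} can be rewritten as
\begin{equation*}
x_{n+1} = \beta_n x_n + \tilde\lambda_n\bigl(T(\beta_n x_n) - \beta_n x_n\bigr) \quad \forall n \geq 0,
\end{equation*}
where $\tilde\lambda_n := \alpha \lambda_n$. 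Thus \eqref{it-sch-av} is exactly an instance of the iterative scheme \eqref{it-sch-nonexp} applied to the nonexpansive mapping $T$, and the corollary will follow from Theorem \ref{tikhonov-nonexp} once we verify (a) that $\fix T = \fix R$ and (b) that the relaxation sequence $(\tilde\lambda_n)_{n\geq 0}$ inherits the hypotheses required by that theorem.

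For (a), one checks directly from $R = (1-\alpha)\id + \alpha T$ and $\alpha \in (0,1)$ that $Rx = x$ if and only if $Tx = x$; in particular $\fix T = \fix R \neq \emptyset$. For (b), the bound $0 < \lambda_n \leq 1/\alpha$ gives $0 < \tilde\lambda_n \leq 1$, the assumption $\liminf_{n\to+\infty}\lambda_n > 0$ yields $\liminf_{n\to+\infty}\tilde\lambda_n = \alpha\liminf_{n\to+\infty}\lambda_n > 0$, and finally
\begin{equation*}
\sum_{n\geq 1}|\tilde\lambda_n - \tilde\lambda_{n-1}| = \alpha \sum_{n\geq 1}|\lambda_n - \lambda_{n-1}| < +\infty.
\end{equation*}
The hypotheses on $(\beta_n)_{n\geq 0}$ are unchanged.

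Applying Theorem \ref{tikhonov-nonexp} to $T$ with the relaxation sequence $(\tilde\lambda_n)_{n\geq 0}$ gives strong convergence of $(x_n)_{n\geq 0}$ to $\proj_{\fix T}(0) = \proj_{\fix R}(0)$, which is the assertion. There is no genuine obstacle here: the only step that requires a moment of care is the rescaling of $\lambda_n$ to $\alpha\lambda_n$ and the observation that the upper bound $1/\alpha$ on $\lambda_n$ is precisely what is needed to keep $\tilde\lambda_n$ in $(0,1]$, while all other conditions transfer verbatim.
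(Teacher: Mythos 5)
Your proof is correct and follows exactly the paper's own route: decompose $R=(1-\alpha)\id+\alpha T$, absorb $\alpha$ into the relaxation parameter $\tilde\lambda_n=\alpha\lambda_n\in(0,1]$, note $\fix T=\fix R$, and invoke Theorem \ref{tikhonov-nonexp}. Your version merely spells out the verification of the transferred hypotheses in more detail than the paper does.
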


\begin{proof} Since $R$ is $\alpha$-averaged, there exists a nonexpansive operator 
$T:{\cal H}\rightarrow{\cal H}$ such that $R=(1-\alpha)\id+\alpha T$. The conclusion follows from Theorem \ref{tikhonov-nonexp}, by taking into account that \eqref{it-sch-av} is equivalent to 
$$x_{n+1}=\beta_nx_n+\alpha\lambda_n\big(T(\beta_nx_n)-\beta_nx_n\big) \ \forall n\geq 0$$ and that $\fix R=\fix T$.  
\end{proof}

\section{A forward-backward algorithm with Tikhonov regularization term}\label{sec3}

This section is dedicated to the formulation and convergence analysis of a forward-backward algorithm with Tikhonov regularization terms, which generates a sequence of iterates that converges strongly to the minimal norm solution of the set of zeros of the sum of two maximally monotone operators, one of them being single-valued. 

For readers' convenience, we recall some standard notions and results in monotone operator theory 
which will be used in the following (see also \cite{bo-van, bauschke-book, simons}). For an arbitrary set-valued operator $A:{\cal H}\rightrightarrows {\cal H}$ we denote by 
$\gr A=\{(x,u)\in {\cal H}\times {\cal H}:u\in Ax\}$ its graph. Then $A^{-1} :{\cal H}\rightrightarrows {\cal H}$, which is the operator with $\gr A^{-1}=\{(x,u)\in {\cal H}\times {\cal H}:x\in Au\}$, denotes the inverse operator of $A$. 
We use also the notation $\zer A=\{x\in{\cal{H}}:0\in Ax\}$ for the set of zeros of $A$. We say that $A$ is monotone, if $\langle x-y,u-v\rangle\geq 0$ for all $(x,u),(y,v)\in\gr A$. A monotone operator $A$ is said to be maximally monotone, if there exists no proper monotone extension of the graph of $A$ on ${\cal H}\times {\cal H}$.
The resolvent of $A$, $J_A:{\cal H} \rightrightarrows {\cal H}$, is defined by $$J_A=(\id+A)^{-1},$$ where $\id :{\cal H} \rightarrow {\cal H}, \id(x) = x$ for all $x \in {\cal H}$, is the identity operator on ${\cal H}$. Moreover, if $A$ is maximally monotone, then $J_A:{\cal H} \rightarrow {\cal H}$ is single-valued and maximally monotone
(see \cite[Proposition 23.7 and Corollary 23.10]{bauschke-book}). 

Let $\gamma>0$. We say that $B:{\cal H}\rightarrow {\cal H}$ is $\gamma$-cocoercive, if $\langle x-y,Bx-By\rangle\geq \gamma\|Bx-By\|^2$ for all $x,y\in {\cal H}$. 

The following technical result (see \cite[Theorem 3(b)]{og-yam} and \cite[Proposition 2.4]{comb-yam})  gives an expression for the the averaged parameter of the composition of two averaged operators. We refer  also
to \cite[Proposition 4.32]{bauschke-book} for other results of this type.

\begin{proposition}\label{av-og-yam} 
Let $T_i:{\cal H}\rightarrow {\cal H}$ be $\alpha_i$-averaged, where $\alpha_i\in(0,1)$, $i=1,2$. 
Then the composition $T_1\circ T_2$ is $\alpha$-averaged, where 
$$\alpha=\frac{\alpha_1+\alpha_2-2\alpha_1\alpha_2}{1-\alpha_1\alpha_2}\in(0,1).$$ 
\end{proposition}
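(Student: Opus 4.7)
My plan is to use the well-known characterization that a mapping $T:{\cal H}\to{\cal H}$ is $\alpha$-averaged (with $\alpha\in(0,1)$) if and only if
\begin{equation*}
\|Tx-Ty\|^2 \leq \|x-y\|^2 - \frac{1-\alpha}{\alpha}\|(\id-T)x-(\id-T)y\|^2 \quad \forall x,y\in {\cal H}.
\end{equation*}
This is equivalent to nonexpansiveness of $(1-1/\alpha)\id+(1/\alpha)T$ by a routine expansion of squared norms, so it is a standard companion to the definition given just before the proposition.

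Fix $x,y\in{\cal H}$ and apply the inequality above to $T_2$ at the pair $(x,y)$ and to $T_1$ at the pair $(T_2x,T_2y)$. Denoting
\begin{equation*}
A:=(\id-T_2)x-(\id-T_2)y, \qquad B:=(\id-T_1)T_2x-(\id-T_1)T_2y,
\end{equation*}
chaining the two inequalities yields
\begin{equation*}
\|T_1T_2x-T_1T_2y\|^2 \leq \|x-y\|^2 - \frac{1-\alpha_2}{\alpha_2}\|A\|^2 - \frac{1-\alpha_1}{\alpha_1}\|B\|^2.
\end{equation*}
The identity $\id-T_1T_2=(\id-T_2)+(\id-T_1)\circ T_2$ shows that $(\id-T_1T_2)x-(\id-T_1T_2)y=A+B$, so it suffices to prove
\begin{equation*}
\frac{1-\alpha_2}{\alpha_2}\|A\|^2 + \frac{1-\alpha_1}{\alpha_1}\|B\|^2 \geq \frac{1-\alpha}{\alpha}\|A+B\|^2.
\end{equation*}

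The crux is the algebraic identity: with $p:=(1-\alpha_1)/\alpha_1$ and $q:=(1-\alpha_2)/\alpha_2$, expanding both sides verifies
\begin{equation*}
q\|A\|^2 + p\|B\|^2 \;=\; \frac{pq}{p+q}\|A+B\|^2 + \frac{1}{p+q}\|qA-pB\|^2 \;\geq\; \frac{pq}{p+q}\|A+B\|^2,
\end{equation*}
and a direct computation shows $\frac{pq}{p+q}=\frac{(1-\alpha_1)(1-\alpha_2)}{\alpha_1+\alpha_2-2\alpha_1\alpha_2}=\frac{1-\alpha}{\alpha}$ for the value of $\alpha$ stated in the proposition. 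Finally, $\alpha\in(0,1)$ follows from $\alpha_1+\alpha_2-2\alpha_1\alpha_2=\alpha_1(1-\alpha_2)+\alpha_2(1-\alpha_1)>0$, $1-\alpha_1\alpha_2>0$, and $\alpha<1\Leftrightarrow(1-\alpha_1)(1-\alpha_2)>0$. The only non-routine step is spotting the splitting $q\|A\|^2+p\|B\|^2=\frac{pq}{p+q}\|A+B\|^2+\frac{1}{p+q}\|qA-pB\|^2$; the correct weighting is motivated by asking which linear combination $qA-pB$ makes the cross terms cancel, which is equivalent to identifying the extremal case $qA=pB$ at which the desired inequality becomes equality.
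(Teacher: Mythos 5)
Your proof is correct. Note that the paper itself does not prove this proposition; it simply cites \cite[Theorem 3(b)]{og-yam} and \cite[Proposition 2.4]{comb-yam}, so there is no in-paper argument to compare against. Your argument is essentially the standard one behind those references: the characterization of $\alpha$-averagedness via $\|Tx-Ty\|^2\leq\|x-y\|^2-\frac{1-\alpha}{\alpha}\|(\id-T)x-(\id-T)y\|^2$, the chaining of the two inequalities, the decomposition $\id-T_1T_2=(\id-T_2)+(\id-T_1)\circ T_2$, and the weighted estimate $q\|A\|^2+p\|B\|^2\geq\frac{pq}{p+q}\|A+B\|^2$ (in the literature this last step is usually obtained from convexity of $\|\cdot\|^2$ applied to $A+B=t\,(A/t)+(1-t)(B/(1-t))$ with the optimal $t$; your exact splitting with the residual term $\frac{1}{p+q}\|qA-pB\|^2$ is the identity form of the same inequality). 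All the algebra checks out, including $\frac{pq}{p+q}=\frac{(1-\alpha_1)(1-\alpha_2)}{\alpha_1+\alpha_2-2\alpha_1\alpha_2}=\frac{1-\alpha}{\alpha}$ and the verification that $\alpha\in(0,1)$, and the denominators $p+q$, $\alpha_1+\alpha_2-2\alpha_1\alpha_2$, $1-\alpha_1\alpha_2$ are all strictly positive since $\alpha_1,\alpha_2\in(0,1)$. The proof is complete and self-contained.
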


\begin{theorem}\label{tikhonov-fb-operators} Let $A:{\cal H}\rightrightarrows {\cal H}$ be a maximally monotone operator and $B:{\cal H}\rightarrow {\cal H}$ a $\beta$-cocoercive operator, for $\beta>0$, such that $\zer(A+B)\neq\emptyset$. Let $\gamma\in(0,2\beta]$. 
Consider the iterative scheme 
\begin{equation}\label{it-sch-fb-operators}
x_{n+1}=(1-\lambda_n)\beta_nx_n+\lambda_n J_{\gamma A}\big(\beta_nx_n-\gamma B(\beta_nx_n)\big) \ \forall n\geq 0,\end{equation} 
with $x_0\in{\cal H}$ as starting point and $(\lambda_n)_{n\geq 0}$ and $(\beta_n)_{n\geq 0}$ real sequences satisfying  the conditions: 

(i) $0<\beta_n\leq 1$ for any $n\geq 0$, $\lim_{n\rightarrow+\infty}\beta_n=1$, $\sum_{n\geq 0} (1-\beta_n)=+\infty$ and 
$\sum_{n \geq 1} |\beta_n-\beta_{n-1}|<+\infty$; 

(ii) $0<\lambda_n\leq \frac{4\beta-\gamma}{2\beta}$ for any $n\geq 0$, $\liminf_{n\rightarrow+\infty}\lambda_n>0$ and 
$\sum_{n\geq 1} |\lambda_n-\lambda_{n-1}|<+\infty$.

Then $(x_n)_{n \geq 0}$ converges strongly to $\proj_{\zer(A+B)}(0)$.
\end{theorem}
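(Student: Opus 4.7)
The plan is to realize the iteration \eqref{it-sch-fb-operators} as an instance of the strongly convergent averaged-operator scheme \eqref{it-sch-av} and then invoke Corollary \ref{tikhonov-av}. To this end, define the forward-backward operator
\begin{equation*}
R := J_{\gamma A} \circ (\id - \gamma B) : {\cal H} \to {\cal H}.
\end{equation*}
Then \eqref{it-sch-fb-operators} can be rewritten as $x_{n+1} = (1-\lambda_n)\beta_n x_n + \lambda_n R(\beta_n x_n) = \beta_n x_n + \lambda_n\bigl(R(\beta_n x_n)-\beta_n x_n\bigr)$, matching exactly the form \eqref{it-sch-av}. It is a standard fact that $\fix R = \zer(A+B)$, which by hypothesis is nonempty.

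The key technical point is to determine the averaging parameter of $R$. Since $A$ is maximally monotone, $J_{\gamma A}$ is firmly nonexpansive, hence $\tfrac{1}{2}$-averaged. Since $B$ is $\beta$-cocoercive and $\gamma \in (0, 2\beta]$, the operator $\id - \gamma B$ is $\tfrac{\gamma}{2\beta}$-averaged (a standard computation using cocoercivity). Applying Proposition \ref{av-og-yam} with $\alpha_1 = \tfrac{1}{2}$ and $\alpha_2 = \tfrac{\gamma}{2\beta}$ yields
\begin{equation*}
\alpha = \frac{\tfrac{1}{2} + \tfrac{\gamma}{2\beta} - 2 \cdot \tfrac{1}{2}\cdot \tfrac{\gamma}{2\beta}}{1 - \tfrac{1}{2}\cdot \tfrac{\gamma}{2\beta}} = \frac{2\beta}{4\beta - \gamma} \in (0,1),
\end{equation*}
so that $1/\alpha = \tfrac{4\beta-\gamma}{2\beta}$, which is precisely the upper bound imposed on $\lambda_n$ in hypothesis (ii). Thus hypotheses (i) and (ii) of Corollary \ref{tikhonov-av} are satisfied verbatim.

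Applying Corollary \ref{tikhonov-av} then gives strong convergence of $(x_n)_{n \geq 0}$ to $\proj_{\fix R}(0) = \proj_{\zer(A+B)}(0)$, which is the desired conclusion. The only nonroutine ingredients are the averaging-parameter computation for $\id - \gamma B$ and the composition formula from Proposition \ref{av-og-yam}; everything else is a direct reduction. In particular, no new convergence analysis is required beyond verifying that the stepsize constraint on $\lambda_n$ in (ii) translates exactly into the constraint $\lambda_n \leq 1/\alpha$ of Corollary \ref{tikhonov-av}, which is the whole reason for the somewhat unusual upper bound $\tfrac{4\beta-\gamma}{2\beta}$ appearing in the theorem.
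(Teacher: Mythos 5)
Your reduction is exactly the one the paper uses: rewrite \eqref{it-sch-fb-operators} as \eqref{it-sch-av} with $R=J_{\gamma A}\circ(\id-\gamma B)$, compute the averagedness constant $\tfrac{2\beta}{4\beta-\gamma}$ via Proposition \ref{av-og-yam}, identify $\fix R=\zer(A+B)$, and invoke Corollary \ref{tikhonov-av}. For $\gamma\in(0,2\beta)$ this is complete and matches the paper's argument step for step.

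There is, however, a small but genuine gap at the endpoint $\gamma=2\beta$, which your hypothesis $\gamma\in(0,2\beta]$ allows. In that case $\tfrac{\gamma}{2\beta}=1$, so $\id-\gamma B$ is \emph{not} $\tfrac{\gamma}{2\beta}$-averaged in the sense used here (the paper's definition of $\alpha$-averaged requires $\alpha\in(0,1)$, and cocoercivity only yields nonexpansiveness of $\id-2\beta B$, not averagedness). Consequently Proposition \ref{av-og-yam} does not apply, and your formula gives $\alpha=\tfrac{2\beta}{4\beta-\gamma}=1\notin(0,1)$, so Corollary \ref{tikhonov-av} cannot be invoked either. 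The paper closes this case separately: when $\gamma=2\beta$ the operator $T=J_{\gamma A}\circ(\id-\gamma B)$ is merely nonexpansive, the bound in (ii) reduces to $\lambda_n\leq 1$, and one applies Theorem \ref{tikhonov-nonexp} directly. Adding this two-line case distinction makes your proof complete.
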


\begin{proof} It is immediate that the iterative scheme \eqref{it-sch-fb-operators} can be written in the form 

$$x_{n+1}=\beta_nx_n+\lambda_n\big(T(\beta_nx_n)-\beta_nx_n\big) \ \forall n\geq 0,$$
where $T=J_{\gamma A}\circ(\id -\gamma B)$.

We consider two cases. The first one is when $\gamma\in(0,2\beta)$. 

According to \cite[Corollary 23.8 and Remark 4.24(iii)]{bauschke-book}, 
$J_{\gamma A}$ is $\frac{1}{2}$-cocoercive. 
Moreover, by \cite[Proposition 4.33]{bauschke-book}, $\id -\gamma B$ is $\frac{\gamma}{2\beta}$-averaged. Combining this with 
Proposition \ref{av-og-yam}, we derive that $T$ is $\frac{2\beta}{4\beta-\gamma}$-averaged. The statement follows now from 
Corollary \ref{tikhonov-av}, by noticing that $\fix T=\zer(A+B)$ (see \cite[Proposition 25.1(iv)]{bauschke-book}). 

The second case is when $\gamma=2\beta$. The cocoercivity of $B$ implies that $\id-\gamma B$ is nonexpansive, hence the operator 
$T = J_{\gamma A}\circ(\id -\gamma B)$ is nonexpansive, too, the conclusion follows in this situation from Theorem \ref{tikhonov-nonexp}. 
\end{proof}

\begin{remark}\label{tikhonov} \rm The choice $\lambda_n=1$ for any $n \geq 0$ in the previous theorem leads to the iterative scheme 
$$x_{n+1}= J_{\gamma A}\big(\beta_nx_n-\gamma B(\beta_nx_n)\big) \ \forall n\geq 0,$$
which further becomes in case $B=0$
$$x_{n+1}= J_{\gamma A}\big(\beta_nx_n\big) \ \forall n\geq 0.$$
This last relation can be equivalently written as 
$$x_n\in \frac{1}{\beta_n}x_{n+1}+\frac{\gamma}{\beta_n}Ax_{n+1}=\left(\id +\varepsilon_n\id+\frac{\gamma}{\beta_n}A\right)(x_{n+1}),$$
where $\varepsilon_n\id$ (with $\varepsilon_n:= \frac{1}{\beta_n}-1>0$ and $\lim_{n\rightarrow+\infty}\varepsilon_n=0$) represents 
the Tikhonov regularization term, which enforces the strong convergence of the sequence $(x_n)_{n\geq 0}$ to the minimal norm solution.
For other types of Tikhonov-like methods for monotone inclusion problems we refer the reader to \cite{xu2002, sahu-ansari-yao2015, lehdili-moudafi} and the references therein. 
\end{remark}

In the remaining of this section we turn our attention to the solving of optimization problems of the form
\begin{equation}\label{pr-opt-fb}
\min_{x \in {\cal H}}\{ f(x) + g(x)\},
\end{equation}
where $f:{\cal H}\rightarrow\R\cup\{+\infty\}$ is a proper, convex and lower semicontinuous function and 
$g:{\cal H}\rightarrow \R$ is a convex and Fr\'{e}chet differentiable function with $\frac{1}{\beta}$-Lipschitz continuous gradient, for $\beta > 0$.

For a proper, convex and lower semicontinuous function $f:{\cal H}\rightarrow\R\cup\{+\infty\}$, its (convex) subdifferential at $x\in {\cal H}$ is defined as
$$\partial f(x)=\{u\in {\cal H}:f(y)\geq f(x)+\<u,y-x\> \ \forall y\in {\cal H}\},$$ 
for $x \in {\cal H}$ with $f(x) = +\infty$ and as $\partial f(x) = \emptyset$, otherwise. When seen as a set-valued mapping, the convex subdifferential is a 
maximally monotone operator (see \cite{rock}) and its resolvent is given by $J_{\partial f}=\prox_{f}$ (see \cite{bauschke-book}),
where $\prox_{f}:{\cal H}\rightarrow {\cal H}$,
\begin{equation}\label{prox-def}\prox\nolimits_{ f}(x)=\argmin_{y\in {\cal H}}\left \{f(y)+\frac{1}{2}\|y-x\|^2\right\},
\end{equation}
denotes the proximal operator of $f$. 

\begin{corollary}\label{tikhonov-fb-optimization} Let $f:{\cal H}\rightarrow\R\cup\{+\infty\}$ be a proper, convex and lower semicontinuous 
function and $g:{\cal H}\rightarrow \R$ a convex and Fr\'{e}chet differentiable function with $\frac{1}{\beta}$-Lipschitz continuous gradient, for $\beta > 0$, 
such that $\argmin_{x\in {\cal H}}\{f(x)+g(x)\}\neq\emptyset$. Let $\gamma\in(0,2\beta]$. 
Consider the iterative scheme 
\begin{equation}\label{it-sch-fb-optimization}
x_{n+1}=(1-\lambda_n)\beta_nx_n+\lambda_n \prox\nolimits_{\gamma f}\big(\beta_nx_n-\gamma \nabla g(\beta_nx_n)\big) \ \forall n\geq 0,\end{equation} 
with $x_0\in{\cal H}$ as starting point and $(\lambda_n)_{n\geq 0}$ and $(\beta_n)_{n\geq 0}$ real sequences satisfying  the conditions: 

(i) $0<\beta_n\leq 1$ for any $n\geq 0$, $\lim_{n\rightarrow+\infty}\beta_n=1$, $\sum_{n\geq 0} (1-\beta_n)=+\infty$ and 
$\sum_{n \geq 1} |\beta_n-\beta_{n-1}|<+\infty$; 

(ii) $0<\lambda_n\leq \frac{4\beta-\gamma}{2\beta}$ for any $n\geq 0$, $\liminf_{n\rightarrow+\infty}\lambda_n>0$ and 
$\sum_{n\geq 1} |\lambda_n-\lambda_{n-1}|<+\infty$.

Then $(x_n)_{n \geq 0}$ converges strongly to the minimal norm solution of \eqref{pr-opt-fb}. 
\end{corollary}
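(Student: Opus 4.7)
The plan is to reduce the statement to a direct application of Theorem \ref{tikhonov-fb-operators} by making the identifications $A := \partial f$ and $B := \nabla g$, so that the iterative scheme \eqref{it-sch-fb-optimization} coincides with \eqref{it-sch-fb-operators} and the strong convergence transfers.

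First, I would verify the operator-theoretic hypotheses. Rockafellar's theorem guarantees that the convex subdifferential $\partial f$ of a proper, convex and lower semicontinuous function is maximally monotone, so $A$ satisfies the assumption of Theorem \ref{tikhonov-fb-operators}. For $B = \nabla g$, I would invoke the Baillon--Haddad theorem: since $g$ is convex and Fréchet differentiable with $\frac{1}{\beta}$-Lipschitz continuous gradient, $\nabla g$ is automatically $\beta$-cocoercive. Next, I would match the resolvent step by recalling $J_{\gamma \partial f} = \prox_{\gamma f}$, so the right-hand side of \eqref{it-sch-fb-optimization} is precisely the right-hand side of \eqref{it-sch-fb-operators} with our choices of $A$ and $B$.

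The remaining point is to identify the set of zeros of $\partial f + \nabla g$ with the argmin set of $f+g$. By Fermat's rule, $\bar x \in \argmin_{x\in\mathcal H}\{f(x)+g(x)\}$ if and only if $0 \in \partial(f+g)(\bar x)$. Since $g$ is finite-valued and continuous on all of $\mathcal H$, the Moreau--Rockafellar subdifferential sum formula yields $\partial(f+g) = \partial f + \nabla g$, so $\zer(\partial f + \nabla g) = \argmin_{x\in\mathcal H}\{f(x)+g(x)\}$. In particular, the nonemptiness assumption on the argmin set transfers to the nonemptiness of $\zer(A+B)$ required by Theorem \ref{tikhonov-fb-operators}.

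With all hypotheses in place, Theorem \ref{tikhonov-fb-operators} applies and gives strong convergence of $(x_n)_{n \geq 0}$ to $\proj_{\zer(A+B)}(0) = \proj_{\argmin(f+g)}(0)$, which is exactly the minimal norm element of the solution set of \eqref{pr-opt-fb}. There is no substantial obstacle here: the entire proof is a translation between monotone-inclusion and convex-optimization language, and the only non-cosmetic ingredient is the Baillon--Haddad theorem, which is standard.
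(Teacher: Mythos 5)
Your proposal is correct and follows essentially the same route as the paper: identifying $A=\partial f$ and $B=\nabla g$, invoking the Baillon--Haddad theorem for the $\beta$-cocoercivity of $\nabla g$, using $J_{\gamma\partial f}=\prox_{\gamma f}$, and applying Theorem \ref{tikhonov-fb-operators} after identifying $\zer(\partial f+\nabla g)$ with $\argmin_{x\in{\cal H}}\{f(x)+g(x)\}$. The only difference is that you spell out the Fermat rule and subdifferential sum formula justifying this last identification, which the paper takes for granted.
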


\begin{proof} The statement is a direct consequence of Theorem \ref{tikhonov-fb-operators},  
by choosing $A:=\partial f$ and $B:=\nabla g$ and by taking into account that
$$\zer(\partial f+\nabla g)=\argmin_{x\in {\cal H}}\{f(x)+g(x)\}$$
and the fact that $\nabla g$ is $\beta$-cocoercive due to the Baillon-Haddad Theorem (see \cite[Corollary 18.16]{bauschke-book}). 
\end{proof}

\section{A Douglas-Rachford algorithm with Tikhonov regularization term}\label{sec4}

In this section we derive from the Krasnosel'ski\u{\i}--Mann algorithm formulated in Section \ref{sec2} an iterative scheme of Douglas-Rachford-type, which generates sequences 
that strongly converge to a zero of of the sum of two set-valued maximally monotone operators. 

In what follows, we denote by $R_{A}=2J_{A}-\id$ the reflected resolvent of a maximally monotone operator $A:{\cal H}\rightrightarrows {\cal H}$. 

\begin{theorem}\label{tikhonov-dr-operators} Let $A,B:{\cal H}\rightrightarrows{\cal H}$ be two
maximally monotone operators such that $\zer(A+B)\neq\emptyset$ and $\gamma >0$. Consider the following iterative scheme: 
$$(\forall n \geq 0)\hspace{0.2cm}\left\{
\begin{array}{ll}
y_n=J_{\gamma B}(\beta_nx_n)\\
z_n=J_{\gamma A}(2y_n-\beta_nx_n)\\
x_{n+1}=\beta_nx_n+\lambda_n(z_n-y_n)
\end{array}\right.$$
with $x_0\in{\cal H}$ as starting point and $(\lambda_n)_{n\geq 0}$ and $(\beta_n)_{n\geq 0}$ real sequences satisfying  the conditions: 

(i) $0<\beta_n\leq 1$ for any $n\geq 0$, $\lim_{n\rightarrow+\infty}\beta_n=1$, $\sum_{n\geq 0} (1-\beta_n)=+\infty$ and 
$\sum_{n \geq 1} |\beta_n-\beta_{n-1}|<+\infty$; 

(ii) $0<\lambda_n\leq 2$ for any $n\geq 0$, $\liminf_{n\rightarrow+\infty}\lambda_n>0$ and 
$\sum_{n\geq 1} |\lambda_n-\lambda_{n-1}|<+\infty$.

Then the following statements are true:
\begin{itemize} \item[(a)] $(x_n)_{n\geq 0}$ converges strongly to $\ol x:=\proj_{\fix R_{\gamma A}R_{\gamma B}}(0)$ as $n\rightarrow+\infty$; 
\item[(b)] $(y_n)_{n\geq 0}$ and $(z_n)_{n\geq 0}$ converge strongly to $J_{\gamma B}(\ol x)\in\zer(A+B)$ as $n\rightarrow+\infty$.
\end{itemize} 
\end{theorem}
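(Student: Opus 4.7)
The plan is to recognize the given iteration as an instance of the averaged operator scheme from Corollary \ref{tikhonov-av}, applied to the standard Douglas--Rachford operator, and then transfer strong convergence from $(x_n)$ to $(y_n)$ and $(z_n)$ by continuity of the resolvents.

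First I would define $T := \tfrac{1}{2}(\id + R_{\gamma A}R_{\gamma B})$ and verify the algebraic identity
\[
T(u) - u = J_{\gamma A}\bigl(2J_{\gamma B}(u)-u\bigr) - J_{\gamma B}(u) \quad \forall u \in {\cal H}.
\]
This is a short computation: with $y = J_{\gamma B}(u)$ and $z = J_{\gamma A}(2y-u)$ one has $R_{\gamma B}(u) = 2y-u$ and $R_{\gamma A}(R_{\gamma B}(u)) = 2z - 2y + u$, so $T(u) = u + z - y$. Applying this with $u = \beta_n x_n$ shows that the three-line recursion can be rewritten as
\[
x_{n+1} = \beta_n x_n + \lambda_n\bigl(T(\beta_n x_n)-\beta_n x_n\bigr) \quad \forall n \geq 0,
\]
which is exactly the scheme \eqref{it-sch-av}.

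Next I would check the hypotheses of Corollary \ref{tikhonov-av} for this $T$. Since $A$ and $B$ are maximally monotone, the reflected resolvents $R_{\gamma A}$ and $R_{\gamma B}$ are nonexpansive, hence their composition $R_{\gamma A} R_{\gamma B}$ is nonexpansive, so $T$ is $\tfrac{1}{2}$-averaged. With $\alpha = \tfrac{1}{2}$ the bound $\lambda_n \leq 1/\alpha = 2$ coincides with hypothesis (ii). The assumption $\zer(A+B) \neq \emptyset$ together with the standard identification $\fix(R_{\gamma A}R_{\gamma B}) = \fix T \neq \emptyset$, combined with $J_{\gamma B}(\fix R_{\gamma A}R_{\gamma B}) = \zer(A+B)$ (see \cite[Proposition 26.1]{bauschke-book}), provides nonemptiness of $\fix T$. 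Corollary \ref{tikhonov-av} then yields part (a): $(x_n)_{n\geq 0}$ converges strongly to $\ol x = \proj_{\fix R_{\gamma A}R_{\gamma B}}(0)$.

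For part (b) I would argue by continuity. Since $\beta_n \to 1$ and $x_n \to \ol x$ strongly, we have $\beta_n x_n \to \ol x$ strongly. The resolvent $J_{\gamma B}$ is firmly nonexpansive and in particular continuous, so $y_n = J_{\gamma B}(\beta_n x_n) \to J_{\gamma B}(\ol x)$ strongly; similarly $2y_n - \beta_n x_n \to R_{\gamma B}(\ol x)$ and hence $z_n \to J_{\gamma A}(R_{\gamma B}(\ol x))$. To identify this last limit with $J_{\gamma B}(\ol x)$, I use that $\ol x \in \fix R_{\gamma A}R_{\gamma B}$: writing $\tilde y = J_{\gamma B}(\ol x)$ and $\tilde z = J_{\gamma A}(2\tilde y-\ol x)$, the fixed point relation $R_{\gamma A}(R_{\gamma B}(\ol x))=\ol x$ expands to $2\tilde z - 2\tilde y + \ol x = \ol x$, i.e.\ $\tilde z = \tilde y$. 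Therefore both $(y_n)$ and $(z_n)$ converge strongly to $J_{\gamma B}(\ol x)$, which belongs to $\zer(A+B)$ by the Douglas--Rachford fixed point characterization recalled above.

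The main obstacle is essentially bookkeeping: recognizing the correct averaged operator lurking behind the three-line recursion and checking that the Tikhonov-weighted argument $\beta_n x_n$ enters $T$ in exactly the same way that Corollary \ref{tikhonov-av} requires. Once that identification is made, everything else follows from soft facts (nonexpansiveness of reflected resolvents, continuity of $J_{\gamma A}$ and $J_{\gamma B}$, and the standard fixed-point/zero correspondence).
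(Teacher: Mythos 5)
Your proposal is correct and follows essentially the same route as the paper: both rewrite the three-line recursion as a Krasnosel'ski\u{\i}--Mann iteration for the (averaged) Douglas--Rachford operator built from $R_{\gamma A}R_{\gamma B}$, invoke the strong convergence result of Section \ref{sec2} (the paper applies Theorem \ref{tikhonov-nonexp} with relaxation $\lambda_n/2$, which is exactly what Corollary \ref{tikhonov-av} with $\alpha=\tfrac12$ encapsulates), and then obtain the limits of $(y_n)$ and $(z_n)$ by continuity of the resolvents. The only cosmetic difference is in part (b), where the paper deduces $z_n-y_n\to 0$ by passing to the limit in the update for $x_{n+1}$, while you compute the limit of $(z_n)$ directly and identify it via the fixed-point relation; both arguments are valid.
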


\begin{proof} Taking into account the iteration rules and the definition of the reflected resolvent, the iterative scheme in the  enunciation of the theorem can be equivalently written as
\begin{align}\label{DR-kr-m} x_{n+1}= & \  \beta_nx_n+\lambda_n\Big[J_{\gamma A}\circ(2J_{\gamma B}-\id)(\beta_nx_n)-J_{\gamma B}(\beta_nx_n)\Big]\nonumber\\
                    = & \ \beta_nx_n+\lambda_n\left[\left(\frac{\id +R_{\gamma A}}{2}\circ R_{\gamma B}\right)(\beta_nx_n)-\frac{\id +R_{\gamma B}}{2}(\beta_nx_n)\right]\nonumber\\
                    = & \ \beta_nx_n+\frac{\lambda_n}{2}\big(T(\beta_nx_n)-\beta_nx_n\big) \ \forall n \geq 0,
\end{align}
where $T:=R_{\gamma A}\circ R_{\gamma B} : {\cal H} \rightarrow {\cal H}$ is a nonexpansive operator (see 
\cite[Corollary 23.10(ii)]{bauschke-book}). 
From \cite[Proposition 25.1(ii)]{bauschke-book} we have $\zer(A+B)=J_{\gamma B}(\fix T)$, hence  
$\fix T\neq\emptyset$. By applying Theorem \ref{tikhonov-nonexp}, we obtain that $(x_n)_{n\geq 0}$ converges strongly 
to $\ol x:=\proj_{\fix T}(0)$ as $n\rightarrow+\infty$, hence (a) holds. 

Further, by taking into account the definition of the sequence $(y_n)_{n\geq 0}$ and the continuity of the resolvent operator, we obtain
that $(y_n)_{n\geq 0}$ converges strongly to $J_{\gamma B}\ol x\in\zer(A+B)$ as $n\rightarrow+\infty$. Finally, by taking the limit 
in the recursive formula 
of the sequence $(x_n)_{n\geq 0}$, we obtain that $z_n-y_n$ converges strongly to $0$ as $n\rightarrow+\infty$, thus (b) holds, too. 
\end{proof}

\begin{remark}\label{dr} \rm The classical Douglas-Rachford method, which reads
$$(\forall k\geq 0)\hspace{0.2cm}\left\{
\begin{array}{ll}
y_n=J_{\gamma B}(x_n)\\
z_n=J_{\gamma A}(2y_n-x_n)\\
x_{n+1}=x_n+\lambda_n(z_n-y_n),
\end{array}\right.$$
produces sequences for which in general only weak convergence to a zero of the $A+B$ can be proved (see for example \cite[Theorem 25.6]{bauschke-book}). 
In order to ensure strong convergence, one usually needs to impose restrictve conditions on the monotone operators involved, like uniform monotonicity (which is a generalization of strong monotonicity). This is not the case for the iterative scheme stated in Theorem \ref{tikhonov-dr-operators}, where we are able to guarantee strong convergence in the very general situation of maximally monotone operators.  
\end{remark}

Further, we look at  optimization problems of the form
\begin{equation}\label{pr-opt-dr}
\min_{x \in {\cal H}}\{ f(x) + g(x)\},
\end{equation}
where $f,g:{\cal H}\rightarrow\R\cup\{+\infty\}$ are proper, convex and lower semicontinuous functions. 
We denote by 
$$\dom f =\{x\in{\cal H}: f(x)<+\infty \}$$
the effective domain of the function $f$. 

In order to proceed, we need the following notion. For $S\subseteq {\cal H}$ a convex set, we denote by
$$\sqri S:=\{x\in S:\cup_{\lambda>0}\lambda(S-x) \ \mbox{is a closed linear subspace of} \ {\cal H}\}$$
its strong quasi-relative interior. Notice that we always have $\inte S\subseteq\sqri S$ (in general this inclusion may be strict). 
If ${\cal H}$ is finite-dimensional, then $\sqri S$ coincides with $\ri S$, the relative interior of $S$, which is the interior of $S$ 
with respect to its affine hull. The notion of strong quasi-relative interior belongs to the class of generalized interiority notions and plays 
an important role in the formulation of regularity conditions, which are needed in convex optimization
in order to guarantee duality results and also subdifferential sum formulas. The one considered in the next result is the so-called Attouch-Br\'{e}zis regularity condition. We refer to \cite{bo-van, bauschke-book, simons, Zal-carte} for more interiority notions and their impact on the duality theory. 

\begin{corollary}\label{tikhonov-dr-optimization} Let $f,g:{\cal H}\rightarrow\R\cup\{+\infty\}$ be proper, convex and lower semicontinuous 
functions such that $\argmin_{x\in {\cal H}}\{f(x)+g(x)\}\neq\emptyset$ and $0\in\sqri(\dom f-\dom g)$ and $\gamma >0$. Consider the following iterative 
scheme: 
$$(\forall n\geq 0)\hspace{0.2cm}\left\{
\begin{array}{ll}
y_n=\prox_{\gamma g}(\beta_nx_n)\\
z_n=\prox_{\gamma f}(2y_n-\beta_nx_n)\\
x_{n+1}=\beta_nx_n+\lambda_n(z_n-y_n)
\end{array}\right.$$
with $x_0\in{\cal H}$ as starting point and $(\lambda_n)_{n\geq 0}$ and $(\beta_n)_{n\geq 0}$ real sequences satisfying  the conditions: 

(i) $0<\beta_n\leq 1$ for any $n\geq 0$, $\lim_{n\rightarrow+\infty}\beta_n=1$, $\sum_{n\geq 0} (1-\beta_n)=+\infty$ and 
$\sum_{n \geq 1} |\beta_n-\beta_{n-1}|<+\infty$; 

(ii) $0<\lambda_n\leq 2$ for any $n\geq 0$, $\liminf_{n\rightarrow+\infty}\lambda_n>0$ and 
$\sum_{n\geq 1} |\lambda_n-\lambda_{n-1}|<+\infty$.

Then the following statements are true:
\begin{itemize} \item[(a)] $(x_n)_{n\geq 0}$ converges strongly to $\ol x:=\proj_{\fix T}(0)$ as $n\rightarrow+\infty$, where 
$T=(2\prox_{\gamma f}-\id)\circ(2\prox_{\gamma g}-\id)$; 
\item[(b)] $(y_n)_{n\geq 0}$ and $(z_n)_{n\geq 0}$ converge strongly to $\prox_{\gamma g}(\ol x)\in\argmin_{x\in {\cal H}}\{f(x)+g(x)\}$ 
as $n\rightarrow+\infty$.
\end{itemize} 
\end{corollary}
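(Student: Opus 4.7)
The strategy is to reduce Corollary \ref{tikhonov-dr-optimization} to Theorem \ref{tikhonov-dr-operators} by specialising the two maximally monotone operators to the convex subdifferentials $A:=\partial f$ and $B:=\partial g$. First I would observe that $\partial f$ and $\partial g$ are maximally monotone by Rockafellar's classical theorem (cited in the excerpt), and that their resolvents satisfy $J_{\gamma\partial f}=\prox_{\gamma f}$ and $J_{\gamma\partial g}=\prox_{\gamma g}$, so the three recursive rules in the corollary coincide exactly with the three rules in Theorem \ref{tikhonov-dr-operators} written for $A=\partial f$, $B=\partial g$. The hypotheses on $(\lambda_n)_{n\geq 0}$ and $(\beta_n)_{n\geq 0}$ are identical, so they transfer without change.

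The key nontrivial step is to identify $\zer(\partial f+\partial g)$ with $\argmin_{x\in\mathcal{H}}\{f(x)+g(x)\}$. The inclusion $\partial f+\partial g\subseteq\partial(f+g)$ is always valid, while the Attouch-Br\'{e}zis regularity condition $0\in\sqri(\dom f-\dom g)$ yields the reverse inclusion $\partial(f+g)=\partial f+\partial g$, which in turn gives
\[
\zer(\partial f+\partial g)=\zer\partial(f+g)=\argmin_{x\in\mathcal{H}}\{f(x)+g(x)\}.
\]
Combined with the nonemptiness assumption $\argmin_{x\in\mathcal{H}}\{f(x)+g(x)\}\neq\emptyset$, this gives $\zer(\partial f+\partial g)\neq\emptyset$, which is exactly the hypothesis required by Theorem \ref{tikhonov-dr-operators}.

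Having verified all the hypotheses, I would apply Theorem \ref{tikhonov-dr-operators} directly. Part (a) of the corollary follows from part (a) of the theorem once we note that $R_{\gamma\partial f}=2\prox_{\gamma f}-\id$ and $R_{\gamma\partial g}=2\prox_{\gamma g}-\id$, so the operator $T$ in the corollary is precisely $R_{\gamma A}\circ R_{\gamma B}$ from the theorem. Part (b) follows from part (b) of the theorem together with the set equality $\zer(\partial f+\partial g)=\argmin_{x\in\mathcal{H}}\{f(x)+g(x)\}$ established above, which identifies $\prox_{\gamma g}(\ol x)$ as a minimiser of $f+g$.

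The only step that requires any care beyond bookkeeping is the appeal to the Attouch-Br\'{e}zis condition to obtain the exact subdifferential sum formula; everything else is a direct transcription from Theorem \ref{tikhonov-dr-operators}. No additional convergence analysis is needed, since the strong convergence properties of $(x_n)_{n\geq 0}$, $(y_n)_{n\geq 0}$, and $(z_n)_{n\geq 0}$ are already packaged in the conclusion of that theorem.
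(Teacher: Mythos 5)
Your proposal is correct and follows essentially the same route as the paper: both specialise Theorem \ref{tikhonov-dr-operators} to $A=\partial f$, $B=\partial g$, use $J_{\gamma\partial f}=\prox_{\gamma f}$, and invoke the regularity condition $0\in\sqri(\dom f-\dom g)$ to identify $\zer(\partial f+\partial g)$ with $\argmin_{x\in{\cal H}}\{f(x)+g(x)\}$. The only difference is cosmetic: you spell out the subdifferential sum formula argument, whereas the paper simply cites the corresponding proposition from Bauschke--Combettes.
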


\begin{proof} The results is a direct consequence of Theorem \ref{tikhonov-dr-operators} for $A=\partial f$ and $B=\partial g$ and by noticing 
that the regularity condition $0\in\sqri(\dom f-\dom g)$ ensures the relation (see \cite[Proposition 7.2]{bauschke-book})
$$\zer(\partial f+\partial g)=\argmin_{x\in {\cal H}}\{f(x)+g(x)\}.$$
\end{proof}

\section{Strongly convergent primal-dual algorithms}\label{sec5}

The aim of this section is to induce strong convergence in the nowadays so popular \emph{primal-dual algorithms} designed for solving
highly structured monotone inclusions involving parallel sums and compositions with linear operators. 

\subsection{A primal-dual algorithm of forward-backward-type with Tikhonov regularization terms}\label{sec51}

In this subsection, the following monotone inclusion problem will be in the focus of our investigations.

\begin{problem}\label{pr1}
Let $A:{\cal H}\rightrightarrows {\cal H}$ be a maximally monotone operator and $C:{\cal H}\rightarrow {\cal H}$ a $\mu$-cocoercive operator, for some $\mu>0$. Let $m$ be a strictly positive integer and for any 
$i = 1,...,m,$ let ${\cal G}_i$  
be a real Hilbert space, $B_i,\,D_i:{\cal G }_i \rightrightarrows {\cal G}_i$ be maximally monotone operators such that 
$D_i$ are $\nu_i$-strongly monotone, for some $\nu_i>0$, and 
$L_i:{\cal H}\rightarrow$ ${\cal G}_i$ be a nonzero linear continuous operator. The problem is to solve the primal inclusion
\begin{equation}\label{sum-k-primal-C-D}
\mbox{find } \ol x \in {\cal H} \ \mbox{such that} \ 0\in A\ol x+ \sum_{i=1}^{m}L_i^*(B_i\Box D_i)(L_i \ol x) + C \ol x,
\end{equation}
together with the dual inclusion of Attouch-Th\'{e}ra type (see \cite{AttThe96, combettes-pesquet, vu})
\begin{equation}\label{sum-k-dual-C-D}
\mbox{ find } \ol v_1 \in {\cal G}_1,...,\ol v_m \in {\cal G}_m \ \mbox{such that } \exists x\in {\cal H}: \
\left\{
\begin{array}{ll}
-\sum_{i=1}^{m}L_i^*\ol v_i\in Ax+Cx\\
\ol v_i\in (B_i\Box D_i)(L_ix),\ i=1,...,m.
\end{array}\right.
\end{equation}
\end{problem}

Some of the notations used above are to be specified.  The operator $L_i^*: {\cal G }_i \rightarrow {\cal H}$, defined via 
$\< L_ix,y  \> = \< x,L_i^*y  \>$ for all $x \in {\cal H}$ and all $y \in {\cal G }_i$, denotes the adjoint of the linear continuous operator $L_i: {\cal H} \rightarrow {\cal G }_i$,
for $i = 1,\ldots,m$. We say that $D_i:{\cal G}_i \rightrightarrows {\cal G}_i$ is $\nu_i$-strongly monotone, for some $\nu_i>0$, if $\langle x-y,u-v\rangle\geq \nu_i\|x-y\|^2$ for all $(x,u),(y,v)\in\gr D_i, i=1,...,m.$ 
The parallel sum of the set-valued operators $B_i,\,D_i:{\cal G}_i \rightrightarrows {\cal G}_i$ is defined as
$B_i \Box D_i:{\cal G}_i \rightrightarrows {\cal G}_i,\, B_i \Box D_i  = \left(B_i^{-1} + D_i^{-1}\right)^{-1}$, for $i=1,...,m$.

We say that $(\ol x, \ol v_1,...,\ol v_m)\in{\cal H} \times$ ${\cal G}_1 \times...\times {\cal G}_m$ is a primal-dual solution to Problem 
\ref{pr1}, if
\begin{equation}\label{prim-dual-C-D}-\sum_{i=1}^{m}L_i^*\ol v_i\in A\ol x+C\ol x \mbox{ and }\ol v_i\in (B_i\Box D_i)(L_i\ol x),\ i=1,...,m.\end{equation}

It is easy to see that, if $(\ol x, \ol v_1,...,\ol v_m)\in{\cal H} \times$ ${\cal G}_1 \times...\times {\cal G}_m$ is a primal-dual solution to Problem \ref{pr1}, 
then $\ol x$ is a solution to \eqref{sum-k-primal-C-D} and $(\ol v_1,...,\ol v_m)\in$ ${\cal G}_1 \times...\times {\cal G}_m$ is a solution to 
\eqref{sum-k-dual-C-D}. Moreover, if $\ol x\in{\cal H}$ is a solution to \eqref{sum-k-primal-C-D}, then there exists $(\ol v_1,...,\ol v_m)\in$ ${\cal G}_1 \times...\times {\cal G}_m$ 
such that $(\ol x, \ol v_1,...,\ol v_m)$ is a primal-dual solution to Problem \ref{pr1} and, if 
$(\ol v_1,...,\ol v_m)\in$ ${\cal G}_1 \times...\times {\cal G}_m$ is a solution to \eqref{sum-k-dual-C-D}, then there exists 
$\ol x\in{\cal H}$ such that $(\ol x, \ol v_1,...,\ol v_m)$ is a primal-dual solution to Problem \ref{pr1}. 

\begin{theorem}\label{fb-pd-tikhonov} In Problem \ref{pr1}, suppose that
\begin{align}\label{0-range-fb}
	0\in\ran \left(A+\sum_{i=1}^m L_i^*\circ (B_i\Box D_i)\circ L_i+C\right).
\end{align}

\noindent Let $\tau$ and $\sigma_i$, $i=1,...,m$, be strictly positive numbers such that 
$$2\cdot\min\{\tau^{-1},\sigma_1^{-1},...,\sigma_m^{-1}\}\cdot\min\{\mu,\nu_1,...,\nu_m\}\left(1-\sqrt{\tau\sum_{i=1}^{m}\sigma_i\|L_i\|^2}\right)\geq 1.$$
Let be the starting point $(x_0,v_{1,0},...,v_{m,0}) \in {\cal H} \times $ ${\cal G}_1$ $\times...\times$ ${\cal G}_m$ and set:

\begin{align}\label{fb_pd-alg}
	  \left(\forall n\geq 0\right) \begin{array}{l} \left\lfloor \begin{array}{l}
		p_n = J_{\tau A}\big[\beta_n x_n-\tau\big(\beta_n\sum_{i=1}^{m}L_i^*v_{i,n}+C(\beta_n x_n)\big)\big] \\
		x_{n+1} = \beta_n x_n+\lambda_n(p_n-\beta_nx_n)\\
		\text{For }i=1,\!...,m  \\
				\left\lfloor \begin{array}{l}
					q_{i,n} =  J_{\sigma_i B_i^{-1}}\big[\beta_nv_{i,n}+\sigma_i\big(L_i(2p_n-\beta_nx_n)-D_i^{-1}(\beta_nv_{i,n})\big)\big] \\
					v_{i,n+1} = \beta_nv_{i,n}+\lambda_n(q_{i,n}-\beta_nv_{i,n})
				\end{array} \right.\\
		 \vspace{-4mm}
		\end{array}
		\right.
		\end{array}
\end{align}

where $(\lambda_n)_{n\geq 0}$ and $(\beta_n)_{n\geq 0}$ are real sequences satisfying the conditions: 

(i) $0<\beta_n\leq 1$ for any $n\geq 0$, $\lim_{n\rightarrow+\infty}\beta_n=1$, $\sum_{n\geq 0} (1-\beta_n)=+\infty$ and 
$\sum_{n \geq 1} |\beta_n-\beta_{n-1}|<+\infty$; 

(ii) $0<\lambda_n\leq \frac{4\beta\rho-1}{2\beta\rho}$ for any $n\geq 0$, $\liminf_{n\rightarrow+\infty}\lambda_n>0$ and 
$\sum_{n\geq 1} |\lambda_n-\lambda_{n-1}|<+\infty$, 

for $$\beta=\min\{\mu,\nu_1,...,\nu_m\}$$ and 
$$\rho=\min\{\tau^{-1},\sigma_1^{-1},...,\sigma_m^{-1}\}\left(1-\sqrt{\tau\sum_{i=1}^{m}\sigma_i\|L_i\|^2}\right).$$

Then there exists a primal-dual solution $(\ol x, \ol v_1,..., \ol v_m)$ to Problem \ref{pr1} such that the sequence of primal-dual iterates
$(x_n,v_{1,n},...,v_{m,n})$ converges strongly to $(\ol x, \ol v_1,...,\ol v_m)$ as $n \rightarrow +\infty$.
\end{theorem}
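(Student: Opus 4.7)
The strategy is to cast the primal-dual iteration \eqref{fb_pd-alg} as an instance of the Tikhonov-regularised forward-backward scheme from Theorem \ref{tikhonov-fb-operators}, run on the product Hilbert space $\fK = {\cal H}\times{\cal G}_1\times\cdots\times{\cal G}_m$ equipped with a carefully chosen renormed inner product. On $\fK$ with its standard inner product I would introduce the operator
\[\fM(x,v_1,\ldots,v_m)=\bigl(Ax+\textstyle\sum_{i=1}^m L_i^*v_i,\ B_1^{-1}v_1-L_1 x,\ \ldots,\ B_m^{-1}v_m-L_m x\bigr),\]
which is maximally monotone as the sum of a block-diagonal maximally monotone operator with a bounded skew linear operator, together with the cocoercive operator $\fC(x,v_1,\ldots,v_m)=(Cx, D_1^{-1}v_1, \ldots, D_m^{-1}v_m)$. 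Since each $D_i^{-1}$ is $\nu_i$-cocoercive, $\fC$ is $\beta$-cocoercive in the standard norm of $\fK$. A direct verification shows that $\zer(\fM+\fC)$ coincides with the set of primal-dual solutions of Problem \ref{pr1}, which is nonempty by \eqref{0-range-fb}.

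Next I would define the bounded self-adjoint operator $\boldsymbol{P}:\fK\to\fK$ by
\[\boldsymbol{P}(x,v_1,\ldots,v_m)=\bigl(\tau^{-1}x-\textstyle\sum_{i=1}^m L_i^*v_i,\ \sigma_1^{-1}v_1-L_1 x,\ \ldots,\ \sigma_m^{-1}v_m-L_m x\bigr).\]
The spectral assumption $\sqrt{\tau\sum_{i=1}^m\sigma_i\|L_i\|^2}<1$ yields $\langle\boldsymbol{P}\fz,\fz\rangle\geq\rho\|\fz\|^2$, hence $\boldsymbol{P}$ is positive definite and the renormed inner product $\langle\cdot,\cdot\rangle_{\boldsymbol{P}}:=\langle\boldsymbol{P}\cdot,\cdot\rangle$ produces a Hilbert space $\fK_{\boldsymbol{P}}$ whose norm is equivalent to the original. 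On $\fK_{\boldsymbol{P}}$ the operator $\boldsymbol{P}^{-1}\fM$ is maximally monotone and $\boldsymbol{P}^{-1}\fC$ is $\beta\rho$-cocoercive: indeed, for any $\fz,\fz'\in\fK$,
\[\langle\fz-\fz',\boldsymbol{P}^{-1}\fC\fz-\boldsymbol{P}^{-1}\fC\fz'\rangle_{\boldsymbol{P}}=\langle\fz-\fz',\fC\fz-\fC\fz'\rangle\geq\beta\|\fC\fz-\fC\fz'\|^2\geq\beta\rho\,\|\boldsymbol{P}^{-1}\fC\fz-\boldsymbol{P}^{-1}\fC\fz'\|_{\boldsymbol{P}}^2,\]
because $\|\boldsymbol{P}^{-1}\|\leq\rho^{-1}$.

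The main technical step, and the place where I expect the bulk of the work, is to verify that the resolvent $J^{\boldsymbol{P}}_{\boldsymbol{P}^{-1}\fM}$ in the renormed space factorises exactly into the primal step $p_n$ and the dual steps $q_{i,n}$ of \eqref{fb_pd-alg}. Solving $\boldsymbol{P}(\fy-\fz)\in\fM\fz$ block by block and exploiting the off-diagonal $-L_i^*$ and $-L_i$ entries of $\boldsymbol{P}$, one can first eliminate the primal variable to obtain $z=J_{\tau A}(y-\tau\sum_{i=1}^m L_i^*u_i)$ and then recover $w_i=J_{\sigma_i B_i^{-1}}(u_i+\sigma_i L_i(2z-y))$, where $\fy=(y,u_1,\ldots,u_m)$ and $\fz=(z,w_1,\ldots,w_m)$. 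Applied with $\fy=\beta_n\fz_n-\boldsymbol{P}^{-1}\fC(\beta_n\fz_n)$, this yields precisely the formulas for $p_n$ and $q_{i,n}$, and the affine combination $\fz_{n+1}=(1-\lambda_n)\beta_n\fz_n+\lambda_n J^{\boldsymbol{P}}_{\boldsymbol{P}^{-1}\fM}(\beta_n\fz_n-\boldsymbol{P}^{-1}\fC(\beta_n\fz_n))$ reproduces the relaxation step of \eqref{fb_pd-alg} for $\fz_n=(x_n,v_{1,n},\ldots,v_{m,n})$.

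With this identification in place, Theorem \ref{tikhonov-fb-operators} applies to $\boldsymbol{P}^{-1}\fM$ and $\boldsymbol{P}^{-1}\fC$ on $\fK_{\boldsymbol{P}}$ with cocoercivity constant $\beta\rho$ and step size $\gamma=1$: the hypothesis $2\beta\rho\geq 1$ gives $\gamma\in(0,2\beta\rho]$, and the upper bound on $\lambda_n$ in (ii) is exactly $\frac{4\beta\rho-\gamma}{2\beta\rho}=\frac{4\beta\rho-1}{2\beta\rho}$, while conditions (i)--(ii) otherwise transfer verbatim. The theorem yields strong convergence of $\fz_n$ in $\fK_{\boldsymbol{P}}$ to $\proj^{\boldsymbol{P}}_{\zer(\fM+\fC)}(0)$, and by the norm equivalence the same convergence holds in the original product norm. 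Writing the limit as $(\ol x,\ol v_1,\ldots,\ol v_m)$ and recalling that zeros of $\fM+\fC$ are primal-dual solutions to Problem \ref{pr1} completes the argument.
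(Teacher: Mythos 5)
Your proposal is correct and follows essentially the same route as the paper: the same product space, the same skew-augmented block operator (you merge the paper's $\f M$ and $\f S$ into one operator from the start, which is only cosmetic), the same cocoercive block $\f Q$, the same renorming operator $\f V$, and the same reduction to Theorem \ref{tikhonov-fb-operators} with $\gamma=1$ and cocoercivity constant $\beta\rho$. The resolvent factorisation and the transfer of cocoercivity to the renormed space are verified exactly as in the paper's argument.
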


\begin{remark} \rm (i) Since $D_i:{\cal G }_i \rightrightarrows {\cal G}_i$ is $\nu_i$-strongly monotone, we have that 
$D_i^{-1}:{\cal G }_i \rightarrow {\cal G}_i $ is $\nu_i$-cocoercive, for $i=1,...,m$. 

(ii) The resolvent of the inverse operator of a maximally monotone operator $M:{\cal H}\rightrightarrows {\cal H}$ can be computed as follows (see \cite{bauschke-book}):
\begin{align}
	\label{res-identity}
	\id = J_{\gamma M} + \gamma J_{\gamma^{-1}M^{-1}}\circ \gamma^{-1} \id.
\end{align}
\end{remark}

\begin{proof} The idea is to apply Theorem \ref{tikhonov-fb-operators} in an appropriate product space under the use of appropriate renorming techniques (see \cite{vu}).
We consider the Hilbert space $\fK = \h \times \g_1 \times \,... \times \g_m$ endowed with inner product and associated norm defined, for  $(x,v_1,\!...,v_m)$, $(y,q_1,\!...,q_m) \in \fK$, via
\begin{align}
	\label{dr_def1.001}
	\begin{aligned}
	\< (x,v_1,\!...,v_m),(y,q_1,\!...,q_m) \>_{\fK} &= \<x,y\>_{\h} + \sum_{i=1}^m \< v_i,q_i \>_{\g_i} \\ \text{ and } \|(x,v_1,\!...,v_m)\|_{\fK} &= \sqrt{\|x\|_{\h}^2 + \sum_{i=1}^m \| v_i \|_{\g_i}^2},
	\end{aligned}
\end{align}
respectively. Furthermore, we consider the set-valued operator
\begin{align*}
	\f M : \fK \rightrightarrows \fK, \quad (x,v_1,\!...,v_m) \mapsto ( Ax,  B_1^{-1}v_1, \!...,  B_m^{-1}v_m),
\end{align*}
which is maximally monotone,  since $A$ and $B_i$, $i=1,\!...,m,$ are maximally monotone (see \cite[Proposition 20.22 and Proposition 20.23]{bauschke-book}), and the linear continuous operator
\begin{align*}
	\f S : \fK \rightarrow \fK, \quad (x,v_1,\!...,v_m) \mapsto \left(\sum_{i=1}^m L_i^* v_i, -L_1 x , \!..., - L_m x\right),
\end{align*}
which is skew-symmetric (i.\,e. $\f S^*=-\f S$) and hence maximally monotone (see \cite[Example 20.30]{bauschke-book}). 
We also consider the single-valued operator
\begin{align*}
	\f Q : \fK \rightarrow \fK, \quad (x,v_1,\!...,v_m) \mapsto \left(Cx, D_1^{-1} v_1, \!..., D_m^{-1} v_m\right),
\end{align*}
which is once again maximally monotone, since $C$ and $D_i$ are maximally monotone for $i=1,\!...,m$. 
Therefore, since $\dom \f S = \fK$, $\f M+\f S$ is maximally monotone (see \cite[Corollary 24.4(i)]{bauschke-book}). 
According to \cite[page 672]{vu} $$\f Q \mbox{ is }\beta-\mbox{cocoercive}.$$
Further, one can easily verify that \eqref{0-range-fb} is equivalent to $\zer\left(\f M + \f S + \f Q\right) \neq \varnothing$ and 
(see also \cite[page 317]{combettes-pesquet})
\begin{align}
	\label{optcond}
	\begin{aligned}
	&(x,v_1,\!...,v_m) \in \zer\left(\f M + \f S + \f Q\right) \\ 
	\Leftrightarrow &(x, v_1,\!...,v_m) \text{ is a primal-dual solution to Problem } \ref{pr1}.
	\end{aligned}
\end{align}
We also introduce the linear continuous operator
\begin{align*}
	\f V : \fK \rightarrow \fK, \quad (x,v_1,\!...,v_m) \mapsto \left(\frac{x}{\tau} -\sum_{i=1}^m L_i^* v_i, \frac{v_1}{\sigma_1} -  L_1x , \!..., \frac{v_m}{\sigma_m} - L_m x\right),
\end{align*}
which is self-adjoint and $\rho$-strongly positive (see \cite{vu}), 
namely, the following inequality holds $$\< \f x,\f V\f x\>_{\fK}\geq \rho\|\f x\|_{\fK}^2 \ \forall \f x\in\fK.$$
Therefore, its inverse operator $\f V^{-1}$ exists and it fulfills $\|\f V^{-1}\|\leq \frac{1}{\rho}$.

The algorithmic scheme \eqref{fb_pd-alg} in the statement of the theorem can be written by using this notations as

\begin{align}\label{fb_pd-alg-1}
	  \left(\forall n\geq 0\right) \!\! \begin{array}{l} \left\lfloor \begin{array}{l}
		\beta_n(\tau^{-1}x_n-\sum_{i=1}^m L_i^*v_{i,n})-\tau^{-1}p_n+\sum_{i=1}^m L_i^*q_{i,n}-C(\beta_nx_n)\in Ap_n+\sum_{i=1}^m L_i^*q_{i,n} \\
		x_{n+1} = \beta_n x_n+\lambda_n(p_n-\beta_nx_n)\\
		\text{For }i=1,\!...,m  \\
				\left\lfloor \begin{array}{l}
					\beta_n(\sigma_i^{-1}v_{i,n}-L_ix_n)-\sigma_i^{-1}q_{i,n}+L_ip_n-D_i^{-1}(\beta_nv_{i,n})\in B_i^{-1}(q_{i,n})-L_ip_n \\
					v_{i,n+1} = \beta_nv_{i,n}+\lambda_n(q_{i,n}-\beta_nv_{i,n}),
				\end{array} \right.\\
		 \vspace{-4mm}
		\end{array}
		\right.
		\end{array}
\end{align}
By introducing the sequences 
\begin{align*}
 \fx_n = (x_n,v_{1,n},\!...,v_{m,n}),\ \mbox{and} \ \fy_n =(p_n,q_{1,n},\!...,q_{m,n}) \ \forall n \geq 0,
\end{align*}
the scheme \eqref{fb_pd-alg-1} can equivalently be written in the form
\begin{align}
	\label{fb_pd-alg-2}
	\left(\forall n\geq 0\right)  \left\lfloor \begin{array}{l}
	\beta_n \f V(\fx_n) - \f V(\fy_n) -\f Q(\beta_n\fx_n) \in \left(\f M+\f S\right)(\fy_n) \\
	\fx_{n+1} =\beta_n \fx_n + \lambda_n \left(\fy_n-\beta_n\fx_n\right).
	\end{array}
	\right.
\end{align}
Furthermore, we have for any $n \geq 0$
\begin{align*}
& \ \beta_n \f V(\fx_n) - \f V(\fy_n) -\f Q(\beta_n\fx_n) \in \left(\f M+\f S\right)(\fy_n)\\
\Leftrightarrow & \ \big(\beta_n\f V-\f Q\circ(\beta_n\id)\big)(\fx_n)\in (\f M+\f S+\f V)(\f y_n)\\
\Leftrightarrow & \ \f y_n=(\f M+\f S+\f V)^{-1} \big(\beta_n\f V-\f Q\circ(\beta_n\id)\big)(\f x_n)\\
\Leftrightarrow & \ \f y_n=\big(\id+\f V^{-1}(\f M+\f S)\big)^{-1} \big(\beta_n\id-\f V^{-1}\circ\f Q\circ(\beta_n\id)\big)(\f x_n)\\
\Leftrightarrow & \ \f y_n=J_{\f A}\big(\beta_n\f x_n-\f B(\beta_n\f x_n)\big),
\end{align*}
where \begin{align}\label{a-b}
		\f A:=\f V^{-1}\left(\f M+\f S \right) \ \mbox{and} \ \f B := \f V^{-1}\f Q. 
\end{align}

Let $\fK_{\f V}$ be the Hilbert space with inner product and norm defined, for $\fx,\fy \in \fK$, by
\begin{align}\label{dr_HSKV}
 \< \fx,\fy \>_{\fK_{\f V}} = \< \fx, \f V \fy \>_{\fK} \text{ and } \|\fx\|_{\fK_{\f V}} = \sqrt{\< \fx, \f V \fx \>_{\fK}},
\end{align}
respectively. As the set-valued operators $\f M+\f S$ and $ \f Q$ are maximally monotone on $\fK$, the operators
$\f A$ and $\f B$ 
are maximally monotone on $\fK_{\f V}$ (see also \cite{vu}). Furthermore, $\f B$ is $\beta\rho$-cocoercive on $\fK_{\f V}$. Moreover, since $\f V$ is self-adjoint and $\rho$-strongly positive, 
weak and strong convergence in $\fK_{\f V}$ are equivalent with weak and strong convergence in $\fK$, respectively.

Taking this into account, it follows that \eqref{fb_pd-alg-2} becomes
$$\left(\forall n\geq 0\right) \ \fx_{n+1} =\beta_n \fx_n + \lambda_n \Big(J_{\f A}\big(\beta_n\f x_n-\f B(\beta_n\f x_n)\big)-\beta_n\fx_n\Big),$$
which is the algorithm presented in Theorem \ref{tikhonov-fb-operators} for determining the zeros of $\f A+\f B$ in case $\gamma=1$. However, we have
$$\zer(\f A+\f B) = \zer(\f V^{-1}\left(\f M+\f S + \f Q\right)) = \zer(\f M + \f S + \f Q).$$
According to Theorem \ref{tikhonov-fb-operators}, $\f x_n$ converges strongly to $\proj_{\zer(\f A+\f B)}(0,0,...,0)$ in the space 
$\fK_{\f V}$ as $n\rightarrow+\infty$ and the conclusion follows from \eqref{optcond}. 
\end{proof}

In the remaining of this subsection we investigate the convergence property of the algorithm \eqref{fb_pd-alg}
in the context of simultaneously solving complexly structured convex optimization problems and their Fenchel duals. The problem under investigation is the following one.

\begin{problem}\label{pr} Let $f\in\Gamma({\cal H})$ and $h:{\cal H}\rightarrow \R$ be a convex and differentiable function with a $\mu^{-1}$-Lipschitz continuous gradient, for some $\mu>0$.
Let $m$ be a strictly positive integer and for $i=1, ..., m$, let ${\cal G}_i$  be a real Hilbert space,
$g_i, l_i \in\Gamma({\cal G}_i)$ such that $l_i$ is $\nu_i$-strongly convex, for some $\nu_i > 0$
and $L_i:{\cal H}\rightarrow$ ${\cal G}_i$ a nonzero linear continuous operator. Consider the convex optimization problem
\begin{equation}\label{sum-k-prim-f2-opt}
\inf_{x\in {\cal H}}\left\{f(x)+\sum_{i=1}^{m}(g_i \Box l_i)(L_ix)+h(x)\right\}
\end{equation}
and its Fenchel-type dual problem
\begin{equation}\label{sum-k-dual-f2-opt}
\sup_{v_i\in {\cal{G}}_i,\, i=1,\!...,m}\left\{-\big(f^*\Box h^*\big)\left(-\sum_{i=1}^{m}L_i^*v_i\right)-
\sum_{i=1}^{m}\big(g_i^*(v_i)+l_i^*(v_i) \big) \right\}.
\end{equation}
\end{problem}
We denote by $\Gamma({\cal H})$ the set of proper, convex and lower semicontinuous functions defined on ${\cal H}$ with values 
in the extended real line $\R\cup\{+\infty\}$. The conjugate of a function $f$ is $f^*:\h \rightarrow \B$, 
$f^*(p)=\sup{\left\{ \left\langle p,x \right\rangle -f(x) : x\in\h \right\}}$ for all $p \in \h$. Moreover, if $f \in \Gamma(\h)$, 
then $f^* \in \Gamma(\h)$, as well, and $(\partial f)^{-1}=\partial f^*$. Finally, having two proper functions $f,\,g : \h \rightarrow \B$, their infimal convolution
is defined by $f \Box g : \h \rightarrow \B$, $(f \Box g) (x) = \inf_{y \in \h}\left\{ f(y) + g(x-y) \right\}$ for all $x \in \h$. 

\begin{corollary}\label{fb-pd-tikhonov-opt} In Problem \ref{pr}, suppose that
\begin{align}\label{0-range-fb-opt}
	0\in\ran \left(\partial f+\sum_{i=1}^m L_i^*\circ (\partial g_i\Box \partial l_i)\circ L_i+\nabla h\right).
\end{align}

\noindent Let $\tau$ and $\sigma_i$, $i=1,...,m$, be strictly positive numbers such that 
$$2\cdot\min\{\tau^{-1},\sigma_1^{-1},...,\sigma_m^{-1}\}\cdot\min\{\mu,\nu_1,...,\nu_m\}\left(1-\sqrt{\tau\sum_{i=1}^{m}\sigma_i\|L_i\|^2}\right)\geq 1.$$
Let be the starting point $(x_0,v_{1,0},...,v_{m,0}) \in {\cal H} \times $ ${\cal G}_1$ $\times...\times$ ${\cal G}_m$ and set:

\begin{align}\label{fb_pd-alg-opt}
	  \left(\forall n\geq 0\right) \begin{array}{l} \left\lfloor \begin{array}{l}
		p_n = \prox_{\tau f}\big[\beta_n x_n-\tau\big(\beta_n\sum_{i=1}^{m}L_i^*v_{i,n}+\nabla h(\beta_n x_n)\big)\big] \\
		x_{n+1} = \beta_n x_n+\lambda_n(p_n-\beta_nx_n)\\
		\text{For }i=1,\!...,m  \\
				\left\lfloor \begin{array}{l}
					q_{i,n} =  \prox_{\sigma_i g_i^*}\big[\beta_nv_{i,n}+\sigma_i\big(L_i(2p_n-\beta_nx_n)-\nabla l_i^*(\beta_nv_{i,n})\big)\big] \\
					v_{i,n+1} = \beta_nv_{i,n}+\lambda_n(q_{i,n}-\beta_nv_{i,n}),
				\end{array} \right.\\
		 \vspace{-4mm}
		\end{array}
		\right.
		\end{array}
\end{align}

where $(\lambda_n)_{n\geq 0}$ and $(\beta_n)_{n\geq 0}$ are real sequences satisfying the conditions: 

(i) $0<\beta_n\leq 1$ for any $n\geq 0$, $\lim_{n\rightarrow+\infty}\beta_n=1$, $\sum_{n\geq 0} (1-\beta_n)=+\infty$ and 
$\sum_{n \geq 1} |\beta_n-\beta_{n-1}|<+\infty$; 

(ii) $0<\lambda_n\leq \frac{4\beta\rho-1}{2\beta\rho}$ for any $n\geq 0$, $\liminf_{n\rightarrow+\infty}\lambda_n>0$ and 
$\sum_{n\geq 1} |\lambda_n-\lambda_{n-1}|<+\infty$, 

for $$\beta:=\min\{\mu,\nu_1,...,\nu_m\}$$ and 
$$\rho:=\min\{\tau^{-1},\sigma_1^{-1},...,\sigma_m^{-1}\}\left(1-\sqrt{\tau\sum_{i=1}^{m}\sigma_i\|L_i\|^2}\right).$$

Then there exists $(\ol x, \ol v_1,..., \ol v_m)\in \h \times \g_1 \times \,... \times \g_m$ such that 
$(x_n,v_{1,n},...,v_{m,n})$ converges strongly to $(\ol x, \ol v_1,...,\ol v_m)$ as $n \rightarrow +\infty$ and
$\ol x$ is an optimal solution of the problem \eqref{sum-k-prim-f2-opt}, $(\ol v_1,\!...,\ol v_m)$ is an optimal solution of 
\eqref{sum-k-dual-f2-opt} and the optimal objective values of the two optimization problems coincide.
\end{corollary}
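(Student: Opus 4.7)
The plan is to reduce Corollary \ref{fb-pd-tikhonov-opt} to Theorem \ref{fb-pd-tikhonov} by choosing $A:=\partial f$, $C:=\nabla h$, $B_i:=\partial g_i$ and $D_i:=\partial l_i$ for $i=1,\ldots,m$. First I would verify that these operators satisfy the hypotheses of Problem \ref{pr1}: $\partial f$ is maximally monotone by Rockafellar's theorem; $\nabla h$ is $\mu$-cocoercive by the Baillon--Haddad theorem (the same tool used in Corollary \ref{tikhonov-fb-optimization}); $\partial g_i$ and $\partial l_i$ are maximally monotone, and the $\nu_i$-strong convexity of $l_i$ translates into $\nu_i$-strong monotonicity of $\partial l_i$. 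Hence Theorem \ref{fb-pd-tikhonov} applies with the same constants $\beta$ and $\rho$.

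Next I would check that the iteration \eqref{fb_pd-alg-opt} matches \eqref{fb_pd-alg} under this identification. The identity $J_{\tau\partial f}=\prox_{\tau f}$ handles the $p_n$-update. For the $q_{i,n}$-update we use $(\partial g_i)^{-1}=\partial g_i^*$, so that $J_{\sigma_i\partial g_i^{-1}}=J_{\sigma_i\partial g_i^*}=\prox_{\sigma_i g_i^*}$. Moreover the $\nu_i$-strong convexity of $l_i$ makes $l_i^*$ Fréchet differentiable with $\nu_i^{-1}$-Lipschitz gradient, and $(\partial l_i)^{-1}=\partial l_i^*=\nabla l_i^*$, which is precisely the $\nabla l_i^*(\beta_n v_{i,n})$ term appearing in \eqref{fb_pd-alg-opt}. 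Condition \eqref{0-range-fb} of Theorem \ref{fb-pd-tikhonov} becomes exactly \eqref{0-range-fb-opt}.

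Theorem \ref{fb-pd-tikhonov} then delivers a primal-dual solution $(\bar x,\bar v_1,\ldots,\bar v_m)$ of the inclusion problem associated with this choice of operators, together with strong convergence $(x_n,v_{1,n},\ldots,v_{m,n})\to(\bar x,\bar v_1,\ldots,\bar v_m)$. It remains to translate the inclusion \eqref{prim-dual-C-D} into primal-dual optimality for \eqref{sum-k-prim-f2-opt}--\eqref{sum-k-dual-f2-opt}. The relations $-\sum_{i=1}^m L_i^*\bar v_i\in\partial f(\bar x)+\nabla h(\bar x)$ and $\bar v_i\in(\partial g_i\Box\partial l_i)(L_i\bar x)$, combined with $(\partial g_i\Box\partial l_i)\subseteq\partial(g_i\Box l_i)$ and with the qualification implicit in \eqref{0-range-fb-opt}, yield the Fenchel extremality relations
\[
f(\bar x)+f^*\Bigl(-\sum_{i=1}^m L_i^*\bar v_i-\nabla h(\bar x)\Bigr)=\Bigl\langle\bar x,-\sum_{i=1}^m L_i^*\bar v_i-\nabla h(\bar x)\Bigr\rangle,
\]
and analogously $g_i(L_i\bar x-y_i)+g_i^*(\bar v_i)=\langle L_i\bar x-y_i,\bar v_i\rangle$ and $l_i(y_i)+l_i^*(\bar v_i)=\langle y_i,\bar v_i\rangle$ for the element $y_i$ realizing the parallel sum. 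Summing these identities and using $h(\bar x)+h^*(\nabla h(\bar x))=\langle\bar x,\nabla h(\bar x)\rangle$ shows that $\bar x$ is optimal for \eqref{sum-k-prim-f2-opt}, that $(\bar v_1,\ldots,\bar v_m)$ is optimal for \eqref{sum-k-dual-f2-opt} and that the two optimal values coincide.

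The only delicate point is the last step: extracting strong duality from the monotone inclusion. This is routine in convex analysis but requires invoking the sum/chain rules for subdifferentials, which is where the qualification condition built into \eqref{0-range-fb-opt} is needed; everything else is a direct specialization of the hypotheses and of the algorithm in Theorem \ref{fb-pd-tikhonov}.
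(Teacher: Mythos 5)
Your proposal is correct and follows essentially the same route as the paper: both specialize Theorem \ref{fb-pd-tikhonov} via $A=\partial f$, $C=\nabla h$, $B_i=\partial g_i$, $D_i=\partial l_i$, invoke Baillon--Haddad for the cocoercivity of $\nabla h$, identify $D_i^{-1}=\nabla l_i^*$ and $J_{\sigma_i B_i^{-1}}=\prox_{\sigma_i g_i^*}$, and read the resulting primal-dual inclusion as the optimality system for \eqref{sum-k-prim-f2-opt}--\eqref{sum-k-dual-f2-opt}. The paper simply asserts this last translation (citing that $\partial(g_i\Box l_i)=\partial g_i\Box\partial l_i$ under the strong convexity of $l_i$), whereas you spell out the Fenchel extremality identities; that is a harmless elaboration of the same argument.
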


\begin{remark} \rm The proximal-point operator of the conjugate function can 
be computed via the Moreau's decomposition formula
\begin{equation}\label{prox-f-star}
\prox\nolimits_{\gamma f}+\gamma\prox\nolimits_{(1/\gamma)f^*}\circ\gamma^{-1}\id=\id,
\end{equation}
which is valid for $\gamma>0$ and $f\in \Gamma({\cal H})$ (see \cite{bauschke-book}).  
\end{remark}

\begin{proof} Consider the maximal monotone operators
$$A=\partial f, C=\nabla h, B_i=\partial g_i \ \mbox{and} \ D_i=\partial l_i,\ i=1,\!...,m.$$
The Baillon-Haddad Theorem (see \cite[Corollary 18.16]{bauschke-book}) ensures that $C$ is $\mu$-cocoercive. 
Since $l_i$ is $\nu_i$-strongly convex, $D_i$ is $\nu_i$-strongly monotone, for $i=1,...,m.$ According to 
\cite[Proposition 17.10, Theorem 18.15]{bauschke-book}, $D_i^{-1} = \nabla l_i^*$ is a monotone and $\nu_i^{-1}$-Lipschitz  
continuous operator for $i=1,\!...,m$. The strong convexity of the functions $l_i$ guarantees that $g_i \Box l_i \in \Gamma({\cal G}_i)$ 
(see \cite[Corollary 11.16, Proposition 12.14]{bauschke-book}) and 
$\partial (g_i \Box l_i) = \partial g_i \Box \partial l_i, i=1,...,m$, (see \cite[Proposition 24.27]{bauschke-book}).

Hence, the monotone inclusion problem \eqref{sum-k-primal-C-D} reads
\begin{equation}\label{sum-k-primal-C-D-f}
\mbox{find } \ol x \in {\cal H} \ \mbox{such that } 0\in \partial f(\ol x)+ \sum_{i=1}^{m}L_i^*(\partial g_i \Box \partial l_i) (L_i \ol x)+ \nabla h(\ol x),
\end{equation}
while the dual monotone inclusion problem \eqref{sum-k-dual-C-D} reads
\begin{equation}\label{sum-k-dual-C-D-f}
\mbox{ find } \ol v_1 \in {\cal G}_1,\!...,\ol v_m \in {\cal G}_m \ \mbox{such that } \exists x\in {\cal H}: \
\left\{
\begin{array}{ll}
-\sum_{i=1}^{m}L_i^*\ol v_i\in \partial f(x)+\nabla h(x)\\
\ol v_i\in (\partial g_i \Box \partial l_i) (L_ix),\ i=1,\!...,m.
\end{array}\right.
\end{equation}

One can see that if $(\ol x, \ol v_1,\!...,\ol v_m)\in{\cal H} \times$ ${\cal{G}}_1 \times...\times {\cal{G}}_m$ is a primal-dual solution to 
\eqref{sum-k-primal-C-D-f}-\eqref{sum-k-dual-C-D-f}, namely,
\begin{equation}\label{prim-dual-f2}-\sum_{i=1}^{m}L_i^*\ol v_i\in \partial f(\ol x)+\nabla h(\ol x) \mbox{ and }\ol v_i\in (\partial g_i \Box \partial l_i)(L_i\ol x),\ i=1,\!...,m,\end{equation}
then $\ol x$ is an optimal solution of the problem \eqref{sum-k-prim-f2-opt}, $(\ol v_1,\!...,\ol v_m)$ is an optimal solution of 
\eqref{sum-k-dual-f2-opt} and the optimal objective values of the two problems coincide. Notice that \eqref{prim-dual-f2} is nothing 
else than the system of optimality conditions for the primal-dual pair of convex optimization problems 
\eqref{sum-k-prim-f2-opt}-\eqref{sum-k-dual-f2-opt}.

The conclusion follows now from Theorem \ref{fb-pd-tikhonov}. 
\end{proof}

\begin{remark}\label{existence-opt-sol-reg-cond} \rm (i) The relation \eqref{0-range-fb-opt} in the above theorem is fulfilled if the primal 
problem \eqref{sum-k-prim-f2-opt} has an optimal solution $\ol x\in\h$ and a suitable regularity condition holds. Under these auspices there exists an optimal solution to 
\eqref{sum-k-dual-f2-opt} $(\ol v_1,\!...,\ol v_m)\in {\cal{G}}_1 \times...\times {\cal{G}}_m$, 
such that $(\ol x, \ol v_1,\!...,\ol v_m)$ satisfies the optimality conditions \eqref{prim-dual-f2} and, consequently, \eqref{0-range-fb-opt} holds.

(ii) Further, let us discuss some conditions ensuring the existence of a primal optimal solution.
Suppose that the primal problem \eqref{sum-k-prim-f2-opt} is feasible, which means that its optimal objective
value is not identical $+\infty$. The existence of optimal solutions for \eqref{sum-k-prim-f2-opt} is guaranteed if, 
for instance, $f+h$ is coercive (that is $\lim_{\|x\|\rightarrow\infty}(f+h)(x)=+\infty$)
and for all $i=1,...,m$, $g_i$ is bounded from below. Indeed, under these circumstances, the objective function of
\eqref{sum-k-prim-f2-opt} is coercive (use also \cite[Corollary 11.16 and Proposition 12.14]{bauschke-book} to show that
for all $i=1,...,m$, $g_i\Box l_i$ is bounded from below and $g_i\Box l_i\in\Gamma({\cal{G}}_i)$) and the statement follows
via \cite[Corollary 11.15]{bauschke-book}. On the other hand, if $f+h$ is strongly convex, then the objective function of
\eqref{sum-k-prim-f2-opt} is strongly convex, too, thus \eqref{sum-k-prim-f2-opt} has a unique optimal solution
(see \cite[Corollary 11.16]{bauschke-book}).

(iii) We discuss at this point a suitable regularity condition as mentioned at item (i) above. Since $\dom (g_i \Box l_i) = \dom g_i + \dom l_i,\ i=1,\!...,m$, 
one can use to this end the regularity condition of interiority-type  (see also \cite{combettes-pesquet})
\begin{equation}\label{reg-cond}
(0,\!...,0)\in\sqri\left(\prod_{i=1}^{m}(\dom g_i + \dom l_i)-\{(L_1x,\!...,L_mx):x\in \dom f\}\right).
\end{equation}
This is fulfilled provided that one of the following conditions is verified (see \cite[Proposition 4.3]{combettes-pesquet}):
\begin{enumerate}
\item [(a)] $\dom g_i+\dom l_i={\cal{G}}_i$, $i=1,\!...,m$; 
\item[(b)] ${\cal H}$ and ${\cal{G}}_i$ are 
finite-dimensional 
and there exists $x\in\ri\dom f$ such that $L_ix-r_i\in\ri\dom g_i+\ri\dom l_i$, $i=1,\!...,m$.
\end{enumerate}
\end{remark}

\subsection{A primal-dual algorithm of Douglas-Rachford-type with Tikhonov regularization terms}\label{sec52}

The problem that we investigate in this section reads as follows.

\begin{problem}\label{dr_p1}
Let $A:\h \rightrightarrows \h$ be a maximally monotone operator. Let $m$ be a strictly positive integer and for any 
$i = 1,...,m,$ let ${\cal G}_i$  be a real Hilbert space, $B_i,\,D_i:{\cal G }_i \rightrightarrows {\cal G}_i$ be maximally monotone operators and 
$L_i:{\cal H}\rightarrow$ ${\cal G}_i$ a nonzero linear continuous operator. The problem is to solve 
the primal inclusion
\begin{align}
	\label{dr_opt-p}
	\text{find }\bx \in \h \text{ such that } 0 \in A\bx + \sum_{i=1}^m L_i^* (B_i\Box D_i)(L_i \bx)
\end{align}
together with the dual inclusion
\begin{align}
	\label{dr_opt-d}
	\text{find }\bv_1 \in \g_1,\!...,\bv_m \in \g_m \text{ such that }(\exists x\in\h)\left\{
	\begin{array}{l}
		 - \sum_{i=1}^m L_i^*\bv_i \in Ax \\
		\!\!\bv_i \in \!\! (B_i \Box D_i)(L_ix), \,i=1,\!...,m.
	\end{array}
\right.
\end{align}
\end{problem}

Different to Problem \ref{pr1}, the operators $D_i, i=1,..., m$ are general maximally monotone operators, thus they will have to be addressed through their resolvents. This is why in this context a 
primal-dual algorithm relying on the Douglas-Rachford paradigm is more appropriate.

\begin{theorem}\label{dr-thm} In Problem \ref{dr_p1}, suppose that 
\begin{align}\label{dr_zin}
	0 \in \ran \bigg( A +  \sum_{i=1}^m L_i^*\circ(B_i\Box D_i)\circ L_i  \bigg).
\end{align}
Let $\tau, \sigma_i >0$, $i=1,\!...,m,$ be strictly positive numbers such that
$$\tau \sum_{i=1}^m \sigma_i \|L_i\|^2 < 4.$$ 
Let be the starting point  $(x_0,v_{1,0},...,v_{m,0})\in{\cal H}\times \g_1 \, ... \times \g_m$ and set:
	\begin{align}\label{dr_A1}
	  \left(\forall n\geq 0\right) \begin{array}{l} \left\lfloor \begin{array}{l}
		p_{1,n} = J_{\tau A}\left( \beta_nx_n  - \frac{\tau}{2}\beta_n \sum_{i=1}^m L_i^*v_{i,n} \right) \\
		w_{1,n} = 2p_{1,n} - \beta_nx_n \\
		\text{For }i=1,\!...,m  \\
				\left\lfloor \begin{array}{l}
					p_{2,i,n} = J_{\sigma_i B_i^{-1}}\left(\beta_nv_{i,n} +\frac{\sigma_i}{2} L_i w_{1,n} \right) \\
					w_{2,i,n} = 2 p_{2,i,n} - \beta_nv_{i,n} 
				\end{array} \right.\\
		z_{1,n} = w_{1,n} - \frac{\tau}{2} \sum_{i=1}^m L_i^* w_{2,i,n} \\
		x_{n+1} = \beta_nx_n + \lambda_n ( z_{1,n} - p_{1,n} ) \\
		\text{For }i=1,\!...,m  \\
				\left\lfloor \begin{array}{l}
					z_{2,i,n} = J_{\sigma_i D_i^{-1}}\left(w_{2,i,n} + \frac{\sigma_i}{2}L_i (2 z_{1,n} - w_{1,n}) \right) \\
					v_{i,n+1} = \beta_nv_{i,n} + \lambda_n (z_{2,i,n} - p_{2,i,n}),
				\end{array} \right. \\ \vspace{-4mm}
		\end{array}
		\right.
		\end{array}
	\end{align}
where
$(\lambda_n)_{n\geq 0}$ and $(\beta_n)_{n\geq 0}$ real sequences satisfying  the conditions: 

(i) $0<\beta_n\leq 1$ for any $n\geq 0$, $\lim_{n\rightarrow+\infty}\beta_n=1$, $\sum_{n\geq 0} (1-\beta_n)=+\infty$ and 
$\sum_{n \geq 1} |\beta_n-\beta_{n-1}|<+\infty$; 

(ii) $0<\lambda_n\leq 2$ for any $n\geq 0$, $\liminf_{n\rightarrow+\infty}\lambda_n>0$ and 
$\sum_{n\geq 1} |\lambda_n-\lambda_{n-1}|<+\infty$.

Then there exists an element 
$(\bx, \bv_1,\!...,\bv_m) \in \h \times \g_1 \, ... \times \g_m$ such that the following statements are true:
\begin{enumerate} 
	\item[(a)]\label{thm1.1} by setting 
	\begin{align*}
	\bp_1 &= J_{\tau A}\left( \bx - \frac{\tau}{2} \sum_{i=1}^m L_i^*\bv_{i}  \right), \\
	\bp_{2,i} &= J_{\sigma_i B_i^{-1}}\left(\bv_{i}+\frac{\sigma_i}{2} L_i (2\bp_1-\bx)  \right),\ i=1,\!...,m,
	\end{align*}
	the element $(\bp_1,\bp_{2,1},\!...,\bp_{2,m}) \in \h \times \g_1 \times \!... \times \g_m$ is a primal-dual solution to Problem \ref{dr_p1};
	\item[(b)]\label{thm1.3} $(x_n,v_{1,n},\!...,v_{m,n})$ converges strongly to $(\bx, \bv_1,\!...,\bv_m)$ as $n\rightarrow+\infty$;
	\item[(c)]\label{thm1.5} $(p_{1,n},p_{2,1,n},\!...,p_{2,m,n})$ and $(z_{1,n},z_{2,1,n},\!...,z_{2,m,n})$ 
	converge strongly to $(\bp_1,\bp_{2,1},\!...,\bp_{2,m})$ as $n\rightarrow+\infty$.                    
\end{enumerate} 
\end{theorem}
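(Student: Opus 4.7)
The natural plan is to mimic the proof of Theorem~\ref{fb-pd-tikhonov}: recast Problem~\ref{dr_p1} as a zero inclusion on a product space equipped with a suitable renorming, and then invoke Theorem~\ref{tikhonov-dr-operators} in place of Theorem~\ref{tikhonov-fb-operators}. On $\fK = \h \times \g_1 \times \ldots \times \g_m$ I would introduce
\[
\f M_1 : (x,v_1,\ldots,v_m) \mapsto (Ax,\, B_1^{-1}v_1,\, \ldots,\, B_m^{-1}v_m), \qquad \f M_2 : (x,v_1,\ldots,v_m) \mapsto (0,\, D_1^{-1}v_1,\, \ldots,\, D_m^{-1}v_m),
\]
together with the skew-symmetric operator $\f S(x,v_1,\ldots,v_m) = \bigl(\sum_{i=1}^m L_i^*v_i,\, -L_1x,\, \ldots,\, -L_mx\bigr)$. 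All three are maximally monotone on $\fK$, and a direct check (as for \eqref{optcond}) shows that the primal-dual solutions of Problem~\ref{dr_p1} coincide with the zeros of $\f M_1 + \f M_2 + \f S$; the assumption \eqref{dr_zin} then guarantees $\zer(\f M_1 + \f M_2 + \f S) \neq \varnothing$.

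For the splitting I would write $\f M_1 + \f M_2 + \f S = \bigl(\f M_1 + \tfrac{1}{2}\f S\bigr) + \bigl(\f M_2 + \tfrac{1}{2}\f S\bigr)$ and introduce the self-adjoint linear operator
\[
\f V(x,v_1,\ldots,v_m) = \Bigl(\tfrac{x}{\tau} - \tfrac{1}{2}\sum_{i=1}^m L_i^*v_i,\ \tfrac{v_1}{\sigma_1} - \tfrac{1}{2}L_1x,\ \ldots,\ \tfrac{v_m}{\sigma_m} - \tfrac{1}{2}L_mx\Bigr).
\]
A weighted Cauchy-Schwarz/Young estimate analogous to the one in \cite{vu} shows that the hypothesis $\tau \sum_{i=1}^m \sigma_i \|L_i\|^2 < 4$ makes $\f V$ $\rho$-strongly positive for some $\rho > 0$ (the factor $\tfrac{1}{2}$ in the splitting is exactly what relaxes the FB condition from $<1$ to $<4$); hence $\f V$ induces an equivalent inner product on $\fK$, yielding the Hilbert space $\fK_{\f V}$ on which $\f A := \f V^{-1}\bigl(\f M_2 + \tfrac{1}{2}\f S\bigr)$ and $\f B := \f V^{-1}\bigl(\f M_1 + \tfrac{1}{2}\f S\bigr)$ are maximally monotone with $\zer(\f A + \f B) = \zer(\f M_1 + \f M_2 + \f S)$, and where strong (and weak) convergence are equivalent to those in $\fK$.

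The core step is then to verify that the algorithm \eqref{dr_A1} is exactly the Douglas-Rachford iteration of Theorem~\ref{tikhonov-dr-operators} with $\gamma = 1$, applied to $\f A$ and $\f B$ in $\fK_{\f V}$ and acting on $\fx_n := (x_n, v_{1,n}, \ldots, v_{m,n})$. Unwinding the resolvent identity $(p_{1,n},p_{2,1,n},\ldots,p_{2,m,n}) = J_{\f B}(\beta_n \fx_n)$ yields precisely the coupled equations defining $p_{1,n}$ and $p_{2,i,n}$, while unwinding $J_{\f A}$ applied to $2(p_{1,n},p_{2,1,n},\ldots,p_{2,m,n}) - \beta_n\fx_n$ produces the explicit update for $z_{1,n}$ (whose first component is a pure identity step, since $\f M_2$ vanishes on $\h$) together with the $J_{\sigma_i D_i^{-1}}$-steps producing $z_{2,i,n}$. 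This componentwise bookkeeping is the main technical obstacle, but it is routine given the template established in the proof of Theorem~\ref{fb-pd-tikhonov}. Once the identification is made, Theorem~\ref{tikhonov-dr-operators} delivers strong convergence of $(\fx_n)$ to some limit $(\bx, \bv_1, \ldots, \bv_m) := \proj_{\fix (R_{\f A} R_{\f B})}(0)$ in $\fK_{\f V}$, which by $\rho$-strong positivity of $\f V$ is strong convergence in $\fK$; this proves (b). By continuity of the resolvents, $(p_{1,n}, p_{2,1,n}, \ldots, p_{2,m,n})$ then converges strongly to $(\bp_1, \bp_{2,1}, \ldots, \bp_{2,m})$ as defined in (a), and Theorem~\ref{tikhonov-dr-operators}(b) ensures that this limit lies in $\zer(\f A + \f B) = \zer(\f M_1 + \f M_2 + \f S)$, i.e.\ is a primal-dual solution, establishing (a). Finally, passing to the limit in the updates of $x_{n+1}$ and $v_{i,n+1}$, using $\liminf_n \lambda_n > 0$, $\beta_n \to 1$ and the already-established strong convergence of $(\fx_n)$, forces $z_{1,n} - p_{1,n} \to 0$ and $z_{2,i,n} - p_{2,i,n} \to 0$ strongly, which combined with (a) yields (c).
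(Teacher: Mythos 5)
Your proposal follows essentially the same route as the paper's own proof: the same product space $\fK$, the same operators ($\f M_1,\f M_2,\f S$ are the paper's $\f M,\f Q,\f S$), the same splitting into $\f M+\tfrac12\f S$ and $\f Q+\tfrac12\f S$, the same renorming operator $\f V$ with the factor $\tfrac12$, and the same reduction to Theorem \ref{tikhonov-dr-operators} with $\gamma=1$ on $\fK_{\f V}$. The argument is correct; only minor wording (e.g.\ the first component of the $J_{\f A}$-step is an explicit linear solve rather than literally the identity) would need polishing.
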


\begin{proof}
For the proof we use Theorem \ref{tikhonov-dr-operators} (see also \cite{b-h2}) in the same setting as in the proof of Theorem 
\ref{fb-pd-tikhonov}, namely, by considering $\fK = \h \times \g_1 \times \,... \times \g_m$ endowed with inner product and associated norm defined in \eqref{dr_def1.001}. 
Furthermore, we consider again the maximally monotone operator
\begin{align*}
	\f M : \fK \rightrightarrows \fK, \quad (x,v_1,\!...,v_m) \mapsto ( Ax,  B_1^{-1}v_1, \!...,  B_m^{-1}v_m),
\end{align*}
the linear continuous skew-symmetric operator
\begin{align*}
	\f S : \fK \rightarrow \fK, \quad (x,v_1,\!...,v_m) \mapsto \left(\sum_{i=1}^m L_i^* v_i, -L_1 x , \!..., - L_m x\right)
\end{align*}
and the (this time not necessarily single-valued) maximally monotone operator
\begin{align*}
	\f Q : \fK \rightrightarrows \fK, \quad (x,v_1,\!...,v_m) \mapsto \left(0, D_1^{-1} v_1, \!..., D_m^{-1} v_m\right).
\end{align*}
Since $\dom \f S = \fK$, both $\frac{1}{2}\f S+ \f Q$ and $\frac{1}{2}\f S + \f M$ are maximally monotone 
(see \cite[Corollary 24.4(i)]{bauschke-book}). Furthermore, 
$$\eqref{dr_zin} \Leftrightarrow \,\zer\left(\f M + \f S + \f Q\right) \neq \varnothing$$ 
and
\begin{align}
	\label{dr_optcond}
	\begin{aligned}
	&(x,v_1,\!...,v_m) \in \zer\left(\f M + \f S + \f Q\right) \\ 
	\Leftrightarrow &(x, v_1,\!...,v_m) \text{ is a primal-dual solution to Problem } \ref{dr_p1}.
	\end{aligned}
\end{align}
We introduce the linear continuous operator
\begin{align*}
	\f V : \fK \rightarrow \fK, \quad (x,v_1,\!...,v_m) \mapsto \left(\frac{x}{\tau} -\frac{1}{2} \sum_{i=1}^m L_i^* v_i, 
	\frac{v_1}{\sigma_1} - \frac{1}{2} L_1x , \!..., \frac{v_m}{\sigma_m} -\frac{1}{2} L_m x\right),
\end{align*}
which is self-adjoint and $\rho$-strongly positive (see also \cite{b-h2}), for 
$$\rho := \left(1-\frac{1}{2} \sqrt{\tau \sum_{i=1}^m \sigma_i \|L_i\|^2}\right) \min\left\{\frac{1}{\tau}, \frac{1}{\sigma_1}, 
\ldots, \frac{1}{\sigma_m} \right\}>0,$$
namely, the following inequality holds $$\< \f x,\f V\f x\>_{\fK}\geq \rho\|\f x\|_{\fK}^2 \ \forall \f x\in\fK.$$
Therefore, its inverse operator $\f V^{-1}$ exists and it fulfills $\|\f V^{-1}\|\leq \frac{1}{\rho}$.

The algorithmic scheme \eqref{dr_A1} is equivalent to
	\begin{align}\label{dr_A1.1}
	  (\forall n \geq 0) \ \!\begin{array}{l} \left\lfloor \begin{array}{l}
		\frac{\beta_nx_n- p_{1,n}}{\tau} - \frac{1}{2} \sum_{i=1}^m L_i^*(\beta_nv_{i,n}-p_{2,i,n}) \in \frac{1}{2} \sum_{i=1}^m L_i^*p_{2,i,n}+Ap_{1,n} \\
		w_{1,n} = 2p_{1,n} -\beta_n x_n  \\
		\text{For }i=1,\!...,m  \\
				\left\lfloor \begin{array}{l}
				 \frac{\beta_nv_{i,n} - p_{2,i,n}}{\sigma_i} 	- \frac{1}{2} L_i (\beta_nx_n - p_{1,n})  \in -\frac{1}{2}L_i p_{1,n} + B_i^{-1}p_{2,i,n} \\
					w_{2,i,n} = 2 p_{2,i,n} - \beta_nv_{i,n} \\
				\end{array} \right.\\
		\frac{w_{1,n}-z_{1,n}}{\tau} - \frac{1}{2}\sum_{i=1}^m L_i^* w_{2,i,n} = 0 \\
		x_{n+1} = \beta_nx_n + \lambda_n ( z_{1,n} - p_{1,n} ) \\
		\text{For }i=1,\!...,m  \\
				\left\lfloor \begin{array}{l}
					\frac{w_{2,i,n} - z_{2,i,n}}{\sigma_i} - \frac{1}{2}L_i(w_{1,n}-z_{1,n}) \in -\frac{1}{2}L_i z_{1,n} + D_i^{-1}z_{2,i,n} \\
					v_{i,n+1} =\beta_n v_{i,n} + \lambda_n (z_{2,i,n} - p_{2,i,n}). \\
				\end{array} \right. \\		
		\end{array}
		\right.
		\end{array}
	\end{align}
By considering for any $n\geq 0$ the notations 
$$\fx_n = (x_n,v_{1,n},\!...,v_{m,n}), \fy_n =(p_{1,n},p_{2,1,n},\!...,p_{2,m,n}) \ \mbox{and} \ \fz_n =(z_{1,n},z_{2,1,n},\!...,z_{2,m,n}),$$
the iterative scheme \eqref{dr_A1.1} can be written as
\begin{align}
	\label{dr_A1.2}
	\left(\forall n\geq 0\right)  \left\lfloor \begin{array}{l}
	\f V(\beta_n\fx_n - \fy_n ) \in \left(\frac{1}{2}\f S +\f M\right)\fy_n \\
	\f V(2 \fy_n -\beta_n \fx_n- \fz_n ) \in \left(\frac{1}{2}\f S +\f Q\right)\fz_n  \\
	\fx_{n+1} = \beta_n\fx_n + \lambda_n \left(\fz_n-\fy_n\right)
	\end{array}
	\right.
\end{align}
which is further equivalent to 
\begin{align}
	\label{dr_A1.3}
	\left(\forall n\geq 0\right)  \left\lfloor \begin{array}{l}
	\fy_n = \left(\id + \f V^{-1}(\frac{1}{2}\f S +\f M)\right)^{-1}\left(\beta_n\fx_n \right) \\
	\fz_n = \left(\id + \f V^{-1}(\frac{1}{2}\f S +\f Q)\right)^{-1}\left(2 \fy_n - \beta_n\fx_n \right) \\
	\fx_{n+1} = \beta_n\fx_n  + \lambda_n \left(\fz_n-\fy_n\right).
	\end{array}
	\right.
\end{align}
Let $\fK_{\f V}$ be the Hilbert space with inner product and norm defined, for $\fx,\fy \in \fK$, via
\begin{align}\label{dr_HSKV2}
 \< \fx,\fy \>_{\fK_{\f V}} = \< \fx, \f V \fy \>_{\fK} \text{ and } \|\fx\|_{\fK_{\f V}} = \sqrt{\< \fx, \f V \fx \>_{\fK}},
\end{align}
respectively. As the set-valued operators $\frac{1}{2}\f S+ \f M$ and $\frac{1}{2}\f S+ \f Q$ are maximally monotone on $\fK$, the operators
\begin{align}\label{dr_def1.1}
		\f B := \f V^{-1}\left(\frac{1}{2}\f S +\f M\right) \ \mbox{and} \ \f A:=\f V^{-1}\left(\frac{1}{2}\f S +\f Q\right)
\end{align}
are maximally monotone on $\fK_{\f V}$. Furthermore, since $\f V$ is self-adjoint and $\rho$-strongly positive, 
strong convergence in $\fK_{\f V}$ is equivalent with strong convergence in $\fK$.

Taking this into account, \eqref{dr_A1.3} becomes
\begin{align}
	\label{dr_A1.4}
	\left(\forall n\geq 0\right)  \left\lfloor \begin{array}{l}
	\fy_n = J_{\f B}\left(\beta_n\fx_n \right) \\
	\fz_n = J_{\f A}\left(2 \fy_n - \beta_n\fx_n \right) \\
	\fx_{n+1} = \beta_n\fx_n + \lambda_n \left(\fz_n-\fy_n\right),
	\end{array}
	\right.
\end{align}
which is the Douglas--Rachford algorithm formulated in Theorem \ref{tikhonov-dr-operators} in case $\gamma=1$ for determining the zeros of $\f A+\f B$. It is easy to see that
$$\zer(\f A+\f B) = \zer(\f V^{-1}\left(\f M+\f S + \f Q\right)) = \zer(\f M + \f S + \f Q).$$

By Theorem \ref{tikhonov-dr-operators} (a), there  exists $\fbx=(\ol x,\ol v_1,...,\ol v_m)\in\fix(R_{\f A}R_{\f B})$, such that 
$J_{\f B}\fbx\in\zer(\f A+\f B) = \zer(\f M + \f S + \f Q)$. The claim follows from Theorem \ref{tikhonov-dr-operators}, \eqref{dr_optcond} and by writing $J_{\f B}\fbx$ in terms of the resolvents of the operators involved in the expression of $\f B$. 
\end{proof}

We close this section by considering the variational case.

\begin{problem}\label{dr_p1_convex}
Let $f\in\Gamma({\cal H})$, $m$ be a strictly positive integer and for $i=1, ..., m$, let ${\cal G}_i$  be a real Hilbert space,
$g_i, l_i \in\Gamma({\cal G}_i)$ and $L_i:{\cal H}\rightarrow$ ${\cal G}_i$ a nonzero linear continuous operator. Consider the convex optimization problem
\begin{align}
	\label{dr_opt-mp}
	 \inf_{x \in \h}{\left\{f(x)+\sum_{i=1}^m (g_i \Box l_i)(L_ix) \right\}}
\end{align}
and its conjugate dual problem
\begin{align}
	\label{dr_opt-md}
	\sup_{(v_1,...,v_m) \in \g_1\times\,...\times\g_m}{\left\{-f^*\left(  - \sum_{i=1}^m L_i^*v_i\right) - 
	\sum_{i=1}^m \left( g_i^*(v_i) + l_i^*(v_i)  \right) \right\} }.
\end{align}
\end{problem}

\begin{corollary}\label{dr-thm-opt} In Problem \ref{dr_p1_convex}, suppose that 
\begin{align}\label{dr_zin-opt}
	0 \in \ran \bigg( \partial f +  \sum_{i=1}^m L_i^*\circ(\partial g_i\Box \partial l_i)\circ L_i  \bigg).
\end{align}
Let $\tau, \sigma_i >0$, $i=1,\!...,m,$ be strictly positive numbers such that
$$\tau \sum_{i=1}^m \sigma_i \|L_i\|^2 < 4.$$ 
Let be the starting point  $(x_0,v_{1,0},...,v_{m,0})\in{\cal H}\times \g_1 \, ... \times \g_m$ and set:

	\begin{align}\label{dr_A1-opt}
	  \left(\forall n\geq 0\right) \begin{array}{l} \left\lfloor \begin{array}{l}
		p_{1,n} = \prox_{\tau f}\left( \beta_nx_n  - \frac{\tau}{2}\beta_n \sum_{i=1}^m L_i^*v_{i,n} \right) \\
		w_{1,n} = 2p_{1,n} - \beta_nx_n \\
		\text{For }i=1,\!...,m  \\
				\left\lfloor \begin{array}{l}
					p_{2,i,n} =\prox_{\sigma_i g_i^*}\left(\beta_nv_{i,n} +\frac{\sigma_i}{2} L_i w_{1,n} \right) \\
					w_{2,i,n} = 2 p_{2,i,n} - \beta_nv_{i,n} 
				\end{array} \right.\\
		z_{1,n} = w_{1,n} - \frac{\tau}{2} \sum_{i=1}^m L_i^* w_{2,i,n} \\
		x_{n+1} = \beta_nx_n + \lambda_n ( z_{1,n} - p_{1,n} ) \\
		\text{For }i=1,\!...,m  \\
				\left\lfloor \begin{array}{l}
					z_{2,i,n} = \prox_{\sigma_i l_i^*}\left(w_{2,i,n} + \frac{\sigma_i}{2}L_i (2 z_{1,n} - w_{1,n}) \right) \\
					v_{i,n+1} = \beta_nv_{i,n} + \lambda_n (z_{2,i,n} - p_{2,i,n}),
				\end{array} \right. \\ \vspace{-4mm}
		\end{array}
		\right.
		\end{array}
	\end{align}
where $(\lambda_n)_{n\geq 0}$ and $(\beta_n)_{n\geq 0}$ are real sequences satisfying  the conditions: 

(i) $0<\beta_n\leq 1$ for any $n\geq 0$, $\lim_{n\rightarrow+\infty}\beta_n=1$, $\sum_{n\geq 0} (1-\beta_n)=+\infty$ and 
$\sum_{n \geq 1} |\beta_n-\beta_{n-1}|<+\infty$; 

(ii) $0<\lambda_n\leq 2$ for any $n\geq 0$, $\liminf_{n\rightarrow+\infty}\lambda_n>0$ and 
$\sum_{n\geq 1} |\lambda_n-\lambda_{n-1}|<+\infty$.

Then there exists an element 
$(\bx, \bv_1,\!...,\bv_m) \in \h \times \g_1 \, ... \times \g_m$ such that the following statements are true:
\begin{enumerate} 
	\item[(a)]\label{thm1.1-opt} by setting 
	\begin{align*}
	\bp_1 &= \prox\nolimits_{\tau f}\left( \bx - \frac{\tau}{2} \sum_{i=1}^m L_i^*\bv_{i}  \right), \\
	\bp_{2,i} &= \prox\nolimits_{\sigma_i g_i^*}\left(\bv_{i}+\frac{\sigma_i}{2} L_i (2\bp_1-\bx)  \right),\ i=1,\!...,m,
	\end{align*}
	the element $(\bp_1,\bp_{2,1},\!...,\bp_{2,m}) \in \h \times \g_1 \times \!... \times \g_m$ is a primal-dual solution to Problem \ref{dr_p1}, namely, 
	\begin{align}
	\label{dr_operator-proof-conditions-full-0}
	 - \sum_{i=1}^m L_i^*\bv_i \in \partial f(\bx) \ \mbox{and} \  \bv_i \in (\partial g_i \Box \partial l_i)(L_i\bx), \,i=1,\!...,m,
        \end{align}
	hence $\bp_1$ is an optimal solution to \eqref{dr_opt-mp} and $(\bp_{2,1},\!...,\bp_{2,m})$ is an optimal solution to \eqref{dr_opt-md};
	\item[(b)]\label{thm1.3-opt} $(x_n,v_{1,n},\!...,v_{m,n})$ converges strongly to $(\bx, \bv_1,\!...,\bv_m)$ as $n\rightarrow+\infty$;
	\item[(c)]\label{thm1.5-opt} $(p_{1,n},p_{2,1,n},\!...,p_{2,m,n})$ and $(z_{1,n},z_{2,1,n},\!...,z_{2,m,n})$ 
	converge strongly to $(\bp_1,\bp_{2,1},\!...,\bp_{2,m})$ as $n\rightarrow+\infty$.                    
\end{enumerate} 
\end{corollary}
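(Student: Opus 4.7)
The plan is to reduce Corollary \ref{dr-thm-opt} to Theorem \ref{dr-thm} by a direct substitution, and then to convert the resulting primal-dual monotone inclusion statement into the variational optimality system for the convex optimization pair \eqref{dr_opt-mp}--\eqref{dr_opt-md}. All three statements (a), (b) and (c) will follow from the corresponding items of Theorem \ref{dr-thm}; the only genuinely variational issue is to check that a primal-dual zero of the underlying monotone inclusion yields a pair of optimal solutions to \eqref{dr_opt-mp} and \eqref{dr_opt-md}.

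First I would set $A := \partial f$, $B_i := \partial g_i$ and $D_i := \partial l_i$ for $i=1,\!...,m$; by Rockafellar's theorem these are maximally monotone operators on the corresponding Hilbert spaces. Using the identity $J_{\gamma\partial\varphi}=\prox_{\gamma\varphi}$, valid for $\gamma>0$ and $\varphi\in\Gamma(\cdot)$, together with $(\partial g_i)^{-1}=\partial g_i^*$ and $(\partial l_i)^{-1}=\partial l_i^*$, one obtains the identifications
$$J_{\tau A}=\prox\nolimits_{\tau f},\qquad J_{\sigma_i B_i^{-1}}=\prox\nolimits_{\sigma_i g_i^*},\qquad J_{\sigma_i D_i^{-1}}=\prox\nolimits_{\sigma_i l_i^*}.$$
Hence the scheme \eqref{dr_A1-opt} is line by line the scheme \eqref{dr_A1} applied to this choice of operators, and the range condition \eqref{dr_zin-opt} coincides with \eqref{dr_zin} under the same identification. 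Theorem \ref{dr-thm} therefore provides a triple $(\bx,\bv_1,\!...,\bv_m)$ for which items (b) and (c) of the corollary follow immediately, and for which the point $(\bp_1,\bp_{2,1},\!...,\bp_{2,m})$ defined by the displayed formulas is a primal-dual solution to Problem \ref{dr_p1} with the above data, i.e.\ satisfies
$$-\sum_{i=1}^m L_i^*\bp_{2,i}\in\partial f(\bp_1)\quad\mbox{and}\quad \bp_{2,i}\in(\partial g_i\Box\partial l_i)(L_i\bp_1),\ i=1,\!...,m.$$

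It then remains to translate this inclusion-theoretic statement into the optimality system for \eqref{dr_opt-mp}--\eqref{dr_opt-md}. Unfolding the definition of the parallel sum, the second relation reads $L_i\bp_1\in(\partial g_i)^{-1}(\bp_{2,i})+(\partial l_i)^{-1}(\bp_{2,i})=\partial g_i^*(\bp_{2,i})+\partial l_i^*(\bp_{2,i})$, which in particular implies $L_i\bp_1\in\partial(g_i^*+l_i^*)(\bp_{2,i})=\partial(g_i\Box l_i)^*(\bp_{2,i})$. Combined with the first inclusion, this is precisely the Fenchel--Rockafellar optimality system for the primal-dual pair \eqref{dr_opt-mp}--\eqref{dr_opt-md}, and standard Fenchel duality (cf.\ \cite{bauschke-book}) then yields that $\bp_1$ is optimal for \eqref{dr_opt-mp}, $(\bp_{2,1},\!...,\bp_{2,m})$ is optimal for \eqref{dr_opt-md}, and strong duality holds. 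The whole argument is mostly bookkeeping; the only mildly delicate step is the translation of the parallel-sum inclusion into an inclusion in the sum of the subdifferentials of the conjugates, but this uses nothing beyond the definitions of $\Box$ and the conjugate subdifferential.
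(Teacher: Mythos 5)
Your argument is correct and matches the paper's intended route: the paper itself gives no proof of this corollary, but its proof of the forward--backward analogue (Corollary \ref{fb-pd-tikhonov-opt}) proceeds by exactly the same specialization $A=\partial f$, $B_i=\partial g_i$, $D_i=\partial l_i$ together with the identifications $J_{\tau A}=\prox_{\tau f}$, $J_{\sigma_i B_i^{-1}}=\prox_{\sigma_i g_i^*}$, $J_{\sigma_i D_i^{-1}}=\prox_{\sigma_i l_i^*}$, after which Theorem \ref{dr-thm} delivers items (a)--(c). One small point to polish in the final duality step: to obtain equality of the primal and dual objective values you should work with the decomposed inclusion $L_i\bp_1\in\partial g_i^*(\bp_{2,i})+\partial l_i^*(\bp_{2,i})$ (write $L_i\bp_1=a_i+b_i$ and add the two Fenchel--Young equalities, using $(g_i\Box l_i)(L_i\bp_1)\leq g_i(a_i)+l_i(b_i)$) rather than the weakened form $L_i\bp_1\in\partial(g_i^*+l_i^*)(\bp_{2,i})$, since $(g_i^*+l_i^*)^*$ is only the closure of $g_i\Box l_i$ and the dual objective involves $g_i^*(v_i)$ and $l_i^*(v_i)$ separately.
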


\begin{remark}\rm The hypothesis \eqref{dr_zin-opt} in Theorem \ref{dr-thm-opt} is fulfilled, if the primal problem \eqref{dr_opt-mp} has an optimal solution, the regularity condition \eqref{reg-cond} holds and 
$$0\in\sqri(\dom g_i^*-\dom l_i^*) \mbox{ for }i=1,...,m.$$
According to \cite[Proposition 15.7]{bauschke-book}, the latter also guarantees that $g_i\Box l_i\in \Gamma(\g_i)$, $i=1,...,m$. 
\end{remark}

\section{Numerical experiments: applications to the split feasibility problem}\label{sec6}

Let $\mathcal{H}$ and $\mathcal{G}$ be real Hilbert spaces and $L : \mathcal{H} \to \mathcal{G}$ a bounded linear operator. Let $C$ and $Q$ be nonempty, closed and convex subsets of $\mathcal{H}$ and $\mathcal{G}$, respectively. The \textit{split feasibility problem} (SFP) searches a point $x \in \mathcal{H}$ 
with the property 
\begin{align}\label{SFP}
x \in C \text{ and } Lx \in Q.
\end{align}
The (SFP) was originally introduced by Censor and Elfving \cite{CeEl} for solving inverse problems in the context of phase retrieval, medical image reconstruction and intensity modulated radiation therapy. 

We show that the strongly convergent primal-dual algorithms which we have investigated in Section \ref{sec5} are excellently suited to solve the (SFP), especially in the case when infinite dimensional Hilbert spaces are involved. For this purpose, we note that problem $\eqref{SFP}$ can be written equivalently in the form 
\begin{align}\label{min1}
\min_{x \in \mathcal{H}} \left\{ \delta_C(x) + \delta_Q(Lx) \right\}, 
\end{align} 
where 
\begin{align*}
\delta_S(x) := \left\{\begin{array}{ll} 0, & \text{if } x \in S \\
+ \infty, & \text{else} \end{array}\right. 
\end{align*} 
denotes the indicator function of a subset $S$ of a Hilbert space.
Another alternative way to write problem \eqref{SFP} is as the minimization problem
\begin{align}\label{min2}
\min_{x \in \mathcal{H}} \left\{ \frac{1}{2} d_C^2(x) + \delta_Q(Lx) \right\},
\end{align}
where $d_C(x) = \inf_{u \in \mathcal{C}} \|x - u\|$ denotes the distance of the point $x \in \mathcal{H}$ to the set $C$. 
Both optimization problems are special instances of Problem \ref{pr}, consequently, we will solve them by making use of the algorithm stated in 
Corollary \ref{fb-pd-tikhonov-opt}. 

The optimization problem \eqref{min1} can be stated in the framework of Problem \ref{pr} by taking $f= \delta_C$, $m=1$, $g_1 = \delta_Q$, $l_1 = \delta_{\{0\}}$, $L_1=L$ and $h=0$. 
The iterative scheme stated in Corollary \ref{fb-pd-tikhonov-opt} reads:
\begin{align}\label{alg1l2}
	  \left(\forall n\geq 0\right) \begin{array}{l} \left\lfloor \begin{array}{l}
p_n = P_C(\beta_n x_n - \tau \beta_n L^\ast v_n) \\
x_{n+1} = \beta_n x_n + \lambda_n (p_n - \beta_n x_n) \\
q_n = \beta_n v_n + \sigma L(2p_n - \beta_n x_n) - \sigma P_Q(\sigma^{-1} \beta_n v_n + L(2p_n - \beta_n x_n)) \\
v_{n+1} = \beta_n v_n + \lambda_n (q_n - \beta_n v_n).
		\end{array}
		\right.
		\end{array}
\end{align}

The optimization problem \eqref{min1} can be stated in the framework of Problem \ref{pr} by taking $f= 0$, $m=1$, $g_1 = \delta_Q$, $l_1 = \delta_{\{0\}}$, $L_1=L$ and $h= \frac{1}{2} d_C^2$. 
Noticing that $\nabla \left(\frac{1}{2} d_C^2\right) = \operatorname{Id} - P_C$, the iterative scheme stated in Corollary \ref{fb-pd-tikhonov-opt} reads:
\begin{align}\label{alg2l2}
	  \left(\forall n\geq 0\right) \begin{array}{l} \left\lfloor \begin{array}{l}
p_n = \beta_n x_n - \tau ( \beta_n L^\ast v_n + \beta_n x_n - P_C(\beta_n x_n) ) \\
x_{n+1} = \beta_n x_n + \lambda_n (p_n - \beta_n x_n) \\
q_n = \beta_n v_n + \sigma L(2p_n - \beta_n x_n) - \sigma P_Q(\sigma^{-1} \beta_n v_n + L(2p_n - \beta_n x_n)) \\
v_{n+1} = \beta_n v_n + \lambda_n (q_n - \beta_n v_n).
		\end{array}
		\right.
		\end{array}
\end{align}

For the numerical example we consider the following setup: let $\mathcal{H} = \mathcal{G} = L^2([0,2\pi]) := 
\left\{ f : [0,2\pi] \to \mathbb{R} : \int_{0}^{2\pi} |f(t)|^2 dt < + \infty \right\}$ equipped with the scalar product 
$\langle f, g \rangle := \int_{0}^{2\pi} f(t)g(t) dt$ and the associated norm $\|f\| := \left( \int_{0}^{2\pi} |f(t)|^2 dt \right)^{1/2}$ 
for all $f,g \in L^2([0,2\pi])$. The sets
\[C := \left\{ x \in L^2([0,2\pi]) : \int_{0}^{2\pi} x(t) dt \leq 1 \right\} \] and \[Q := \left\{ x \in L^2([0,2\pi]) : \int_{0}^{2\pi} |x(t) - \sin(t) |^2 dt \leq 16 \right\} \] 
are nonempty, closed and convex subsets of $L^2([0,2\pi])$. Notice that $C=\{x \in L^2([0,2\pi]) :\langle x,u\rangle\leq 1\}$ and 
$Q=\{x \in L^2([0,2\pi]) :\|x-f\|\leq 4\}$, where $u:[0,2\pi]\to\R, u(t)=1$ for all $t\in [0,2\pi]$, and 
$f:[0,2\pi]\to\R, f(t)=\sin t$ for all $t\in [0,2\pi]$. 
We define the linear continuous operator $L : L^2([0,2\pi]) \to L^2([0,2\pi])$ as
\begin{align*}
(Lx)(t) := \left(\int_{0}^{2\pi} x(s) ds\right) \cdot t. 
\end{align*}
Let $x, y \in  L^2([0,2\pi])$. Then
\begin{align*}
\langle Lx, y \rangle = \int_{0}^{2\pi} Lx(t) y(t) dt = \int_{0}^{2\pi} x(s) \int_{0}^{2\pi} t y(t) dt ~ds,
\end{align*}
and therefore 
\begin{align*}
(L^\ast y)(t) = \int_{0}^{2\pi} s y(s) ds.
\end{align*} 	
Furthermore, by using the Cauchy-Schwarz inequality yields 
\begin{align*}
\|L\|^2 &= \sup_{ \|x\| = 1 } \int_{0}^{2\pi} Tx(t)^2 dt = \sup_{ \|x\| = 1 } \int_{0}^{2\pi} t^2 dt \left( \int_{0}^{2 \pi} x(s) ds \right)^2 \\ 
&\leq \sup_{ \|x\| = 1 } \frac{8}{3} \pi^3 \int_{0}^{2 \pi} x(s)^2 ds \cdot 2\pi = \frac{16}{3} \pi^4.
\end{align*}
The projection onto the set $C$ can be computed as (see \cite[Example~28.16]{bauschke-book}) \[P_C(x) = \left\{\begin{array}{ll} \frac{1 - \int_{0}^{2\pi} x(t) dt}{2\pi} + x, & \text{if} \ \int_{0}^{2\pi} x(t) dt > 1 \\[3pt]
x, & \text{else}. \end{array}\right.\] 
On the other hand $P_Q$ is given by \cite[Example~28.10]{bauschke-book}
\[P_Q(x) = \left\{\begin{array}{ll} \sin + \frac{4(x - \sin)}{(\int_{0}^{2\pi} |x(t) - \sin(t)|^2 dt )^{1/2}}, & \text{if} \ \int_{0}^{2\pi} |x(t) - \sin(t)|^2 dt > 16 \\[3pt]
x, & \text{else}. \end{array}\right. \]

\begin{table}\label{table1}
	\begin{tabular}{lccc}
		\hline \hline
	    & &  \multicolumn{2}{c}{Number of iterations}         \\ \hline
		$x_0$ & $v_0$  & $\beta_n = 1$  &$\beta_n = 1 - \frac{1}{n+1}$  \\
		\hline
		$\frac{t^2}{10}$        & $\frac{t^2}{10}$             & 13  & 1   \\
		$\frac{t^2}{10}$        & $\frac{1}{2}e^t$             & 20 &  11  \\
		$\frac{t^2}{10}$        & $e^t + \frac{t^2}{24}$       & 21  & 12  \\
		$\frac{1}{2} e^t$       & $\frac{t^2}{10}$             & $>$ 150 &  11 \\
		$\frac{1}{2} e^t$       & $\frac{1}{2}e^t$             & 20 &  12 \\
		$\frac{1}{2} e^t$       & $e^t + \frac{t^2}{24}$       & 21 &  13 \\
		$e^t +\frac{t^2}{24}$   & $\frac{t^2}{10}$             & $>$ 150 &  15  \\
		$e^t +\frac{t^2}{24}$   & $\frac{1}{2}e^t$             & 20 &  13  \\
		$e^t +\frac{t^2}{24}$   & $e^t + \frac{t^2}{24}$       & 21 &  13  \\
		\hline \hline 
	\end{tabular} 	
\caption{Comparison of the variants without and with Tikhonov regularization terms of the primal-dual algorithm \eqref{alg1l2}, for different starting values and step sizes $\tau = 0.1$ and $\sigma = 0.01$.}
\end{table}

We implemented the algorithms \eqref{alg1l2} and \eqref{alg2l2} in MATLAB used symbolic computation for generating the sequences of iterates. One can easily notice that, in this particular setting, the split feasibility problem and, consequently, both addressed optimization problems are solvable. The numerical experiments confirmed that the primal-dual algorithms involving Tikhonov regularization terms outperform the ones without Tikhonov regularization terms, which correspond to the case when $\beta_n=1$ for every $n \geq 0$ and for which is known that they weakly convergence to a primal-dual solution of the corresponding KKT system of optimality conditions.

\begin{table}\label{table2}
	\begin{tabular}{lccc}
		\hline \hline
		& &  \multicolumn{2}{c}{Number of iterations}         \\ \hline
		$x_0$ & $v_0$  & $\beta_n = 1$  &$\beta_n = 1 - \frac{1}{n+1}$  \\
		\hline
		$\frac{t^2}{10}$        & $\frac{t^2}{10}$             & 24  & 1  \\
		$\frac{t^2}{10}$        & $\frac{1}{2}e^t$             & 46 &  10   \\
		$\frac{t^2}{10}$        & $e^t + \frac{t^2}{24}$       & 46  & 10  \\
		$\frac{1}{2} e^t$       & $\frac{t^2}{10}$             & 30 &  6 \\
		$\frac{1}{2} e^t$       & $\frac{1}{2}e^t$             & 24 &  11 \\
		$\frac{1}{2} e^t$       & $e^t + \frac{t^2}{24}$       & 35 &  21  \\
		$e^t +\frac{t^2}{24}$   & $\frac{t^2}{10}$             & 32 &  6  \\
		$e^t +\frac{t^2}{24}$   & $\frac{1}{2}e^t$             & 36 &  12  \\
		$e^t +\frac{t^2}{24}$   & $e^t + \frac{t^2}{24}$       & 24 &  11   \\
		\hline \hline 
	\end{tabular} 	
\caption{Comparison of the variants without and with Tikhonov regularization terms of the primal-dual algorithm \eqref{alg2l2}, for different starting values and step sizes $\tau = 0.1$ and $\sigma = 0.01$.}
\end{table}

In the two tables above  we present the numbers of iterations needed by the two algorithms to approach a solution of the split feasibility problem $(SFP)$. We consider as stopping criterion
$$E(x_n) := \frac{1}{2} ||P_C(x_n) - x_n||^2 + \frac{1}{2} ||P_Q(Lx_n) - Lx_n||^2 \leq 10^{-3},$$
while taking as relaxation variables $\lambda_n = 0.4$ for every $n \geq 0$ and as Tikhonov regularization parameters $\beta_n := 1 - \frac{1}{n+1}$ for every $n \geq 0$, respectively, $\beta_n=1$ for every $n \geq 0$ for the variants without Tikhonov regularization terms.

\section{Further work}\label{sec7}

We point out some directions of research related to proximal methods with Tikhonov regularization terms, which merit to be addressed starting from the investigations made in this paper:

\begin{enumerate}
 \item To consider in the numerical algorithms of type forward-backward and Douglas-Rachford proposed in this paper {\it dynamic step sizes}, which are known to increase the flexibility of the algorithms. This  can be for instance done by formulating first a Krasnosel'ski\u{\i}--Mann--type algorithm for determining an element in the intersection of the sets of fixed points of a family of nonexpansive operators $(T_k)_{k\geq 0}$. Suitable choices of the operators in this family, in the spirit of the investigations we made in the sections \eqref{sec3} and \eqref{sec4}, can lead to iterative algorithms with dynamic step sizes (see  \cite{combettes}). 

\item To employ in the iterative schemes discussed in this article {\it inertial and memory effects}, which are known to contribute to the acceleration of the convergence behaviour of the algorithms.

\item To translate the proposed numerical methods to a framework that goes {\it beyond the Hilbert space setting} considered in this article, in order to allow applications where functional spaces are involved. We refer the reader to \cite{xu2002, sahu-ansari-yao2015, sahu-yao2011} and the 
 references therein for tools and techniques which allow to prove convergence statements in Banach spaces with some particular underlying geometric structures.
\end{enumerate}

\end{document}